\documentclass{article}
\usepackage[utf8]{inputenc}
\usepackage{amsmath}
\title{loop grassmaniann}
\author{dongmouren }
\date{February 2015}
\usepackage[utf8]{inputenc}
\usepackage{amsmath}
\usepackage{amssymb}
\usepackage{amsthm}
\usepackage{youngtab}

\usepackage[english]{babel}

\setlength{\parskip}{6pt}
 
\newtheorem{theorem}{Theorem}
\newtheorem{lemma}{Lemma}
\newtheorem{sublemma}{Sublemma}
\newtheorem{prop}{Proposition}
\newtheorem{conj}{Conjecture}
\newtheorem{defi}{Definition}
\newtheorem{cor}{Corollary}
\title{loop grassmaniann}
\author{dongmouren }
\date{February 2015}
\newcommand{\fg}{\mathfrak{g}}
\newcommand{\fn}{\mathfrak{n}}
\newcommand{\cI}{\mathcal{I}}
\newcommand{\cJ}{\mathcal{J}}
\newcommand{\cO}{\mathcal{O}}

\newcommand{\cG}{\mathcal{G}}
\newcommand{\cL}{\mathcal{L}}

\newcommand{\cK}{\mathcal{K}}

\newcommand{\cR}{\mathcal{R}}
\newcommand{\ZZ}{\ensuremath{\mathbb{Z}}}
\newcommand{\NN}{\ensuremath{\mathbb{N}}}
\newcommand{\CC}{\ensuremath{\mathbb{C}}}
\begin{document}

\title{A relation between Mirković-Vilonen cycles and modules over preprojective algebra of Dynkin quiver of type ADE}
\author{Zhijie Dong }
\date{}
\maketitle
\begin{abstract}
The irreducible components of the variety of all modules over the preprojective algebra and MV cycles both index bases of the universal enveloping algebra of the positive part of a semisimple Lie algebra canonically. To relate these two objects Baumann and Kamnitzer associate a cycle in the affine Grassmannian for a given module. It is conjectured that the ring of functions of the T-fixed point subscheme of the associated cycle is isomorphic to the cohomology ring of the quiver Grassmannian of the module. I give a proof of part of this conjecture. Given this conjecture, I give a proof of the reduceness conjecture in \cite{kamnitzer2016reducedness}.

\end{abstract}

\section{Introduction}

Let $\mathfrak{g}$ be simply-laced semisimple finite dimensional complex Lie algebra.
There are several modern constructions of irreducible representation of $\mathfrak{g}$. In this paper we consider two models which realize the crystal of the positive part $U(\fn)$ of $U(\fg)$. One is by Mirkovic-Vilonen (MV) cycles and the other is by the irreducible compotents of Lusztig's nilpotent variety $\Lambda$ of the preprojective algebra of the quiver $Q$ corresponding to $\fg$.

Baumann and Kamnitzer \cite{MR2892443} studied the relations between $\Lambda$ and MV polytopes. They associate an MV polytope $P(M)$ to a generic module $M$ and construct a bijection between the set of irreducible components of $\Lambda$ and MV polytopes compatible with respect to crystal structures.
Since MV polytopes are in bijection with MV cycles, Kamnitzer and Knutson launched a program towards geometric construction of the MV cycle $X(M)$ in terms of a module $M$ over the preprojective algebra.

Here we consider a version by Kamnizter, Knutson and Mirkovic: conjecturally, the ring of functions $\cO(X(M)^{T})$ on the $T$ fixed point subscheme of the cycle $X(M)$ associted to $M$, is isomorphic to $H^{*}(Gr^{\Pi}(M))$, the cohomology ring of the quiver Qrassmannian of $M$.
In this paper I will construct a map from $\cO(X(M)^{T})$ to $H^{*}(Gr^{\Pi}(M))$ and prove it is isomorphism for the case when $M$ is a representation of $Q$.

In section two I recall the definition of MV cycles, quivers, preprojective algebra and Lusztig's nilpotent variety and state the conjecture precisely.

In chapter three I describe the ring of functions on the $T$ fixed point subscheme of the intersection of closures of certain semi-infinite orbits (which is called "cycle" in this paper). A particular case of these intersections is a scheme theoretic version of MV cycles. We realize these cycles as the loop Grassmannian with a certain condition Y.

In section four I construct the map $\Psi$ from $\cO(X(M)^{T})$ to $H^{*}(Gr^{\Pi}(M))$. Here, $\Psi$ maps certain generators of $\cO(X(M)^{T})$ to Chern classes of tautological bundles over $Gr^{\Pi}(M)$. So we need to check that the Chern classes satisfy the relations of generators of $\cO(X(M)^{T})$. We reduce this problem to a simple $SL_3$ case. In this case we have a torus action on $Gr^{\Pi}(M)$ so we could use localization in equivariant cohomology theory (GKM theory).

In chapter five I will prove $\Psi$ is an isomorphism in the case when $M$ is a representation of $Q$ of type A. 

In chapter six I will state some consequences given the conjecture
(one of which is the reduceness conjecture).

This is a piece of a big project to relate $\cG(G)$ and the quiver $Q$, see \cite{localivan}, and more recently \cite{ivan2}.
\section{Statement of the conjecture}
\subsection{Notation}
Let $G$ be a simply-laced semisimple group over complex numbers. Let I be the set of vertices in the Dynkin diagram of $G$. In this paper I will work over base field $k=\CC$. We fix a Cartan subgroup $T$ of $G$ and a Borel subgroup $B\subset G$. Denote by $N$ the unipotent radical of $B$. Let $\varpi_i,i\in I$ be the fundamental weights. Let $X_*, X^{*}$ be the cocharacter, character lattice and $\langle\ ,\ \rangle$ be the pairing between them. Let W be the Weyl group. Let $e$ and $w_0$ be the unit and the longest element in $W$. Let $\alpha_i$ and $\check{\alpha}_i$ be simple roots and coroots. Let $\Gamma=\{w\varpi_i,w\in W ,i\in I\}$. $\Gamma$ is called the set of chamber weights.\\

Let $d$ be the formal disc and $d^{*}$ be the punctured formal disc. The ring of formal Taylor series is the ring of functions on the formal disc, $ \cO=\{\sum_{n\geq 0}a_{n}t^{n}\}$. The ring of formal Laurent series is the ring of functions on the punctured formal disc, $\cK=\{\sum_{n\geq {n_0}}a_{n}t^n\}$. \\

For $X$ a variety, let Irr($X$) be the set of irreducible components of $X$.

\subsection{MV cycles and polytopes}
For a group $G$, let $G_{\cK}$ be the loop group of $G$ and $G_{\cO}$ the disc group of $G$. 
We define loop grassmannian $\cG(G)$ as the left quotient $G_{\cO} \setminus G_{\cK}$ and  view $\cG(G)$ as an ind-scheme \cite{beilinson1991quantization}, \cite{zhu}.
An MV cycle is a certain finite dimentional 
subscheme in $\cG(G)$. For a cocharacter $\lambda \in X_*(T)$, we denote the point it determines in $\cG(G)$ by $L_\lambda$.
For $w\in W$, let $N^{w}=wNw^{-1}$.
Define $
S^{w}_\lambda =L_\lambda N^{w}_\cK. $
This orbit is an ind-subscheme of $\cG(G)$ and is called semi-infinite orbit since it is of infinite dimension and codimension in $\cG(G)$.\\
An irreducible component of $\overline {S_0^{e} \bigcap S^{w_0}_\lambda}$ is called an MV cycle of weight $\lambda$. Kamnizter \cite{kamnitzer2005} describes them as follows:
\begin{theorem}[\cite{kamnitzer2005}]
Given a collection of integers $(M_\gamma)_{\gamma \in \Gamma}$, if it satisfies edge inequalities, and certain tropical relations, put  $\lambda_{w}=\sum_i M_{w \varpi_i} w \check{\alpha_i}.$\\Then$
\overline {\bigcap_{w\in W}  S^{w}_{\lambda_{w}}} $ is an MV cycle, and each MV cycle arises from this way for the unique data $(M_{\gamma})$.

\end{theorem}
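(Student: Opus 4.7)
My plan is to follow the strategy of associating a polytope to each MV cycle and then characterizing which polytopes arise.

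First, given an MV cycle $Z \subset \cG(G)$, I would define its polytope $P(Z)$ as the image of $Z$ under the $T$-moment map. By the Atiyah--Guillemin--Sternberg convexity theorem applied in the ind-scheme setting, $P(Z)$ is a convex polytope whose vertices are the $T$-fixed points $L_\mu \in Z$. Using the semi-infinite orbit stratification, for each $w \in W$ there is a unique vertex $\mu_w$ of $P(Z)$ that is extremal in the direction $w\rho$, and $Z \subset \overline{S^w_{\mu_w}}$. I would set $M_{w\varpi_i} := \langle \mu_w, w\varpi_i \rangle$, noting this is well defined because it depends only on the chamber weight $w\varpi_i$ and not on the representative $w$ (different $w$ giving the same $w\varpi_i$ fix the same face of the polytope, so the pairing agrees). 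Inverting the relation recovers $\mu_w = \sum_i M_{w\varpi_i}\, w\check{\alpha}_i = \lambda_w$.

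Next, I would verify the two combinatorial conditions. The edge inequalities should fall out from the geometry of adjacent chambers: if $w$ and $w' = ws_i$ differ by a simple reflection, then $\mu_w - \mu_{w'}$ must be a nonnegative multiple of $w\check{\alpha}_i$ because $\mu_w, \mu_{w'}$ are endpoints of an edge of the moment polytope parallel to $w\check{\alpha}_i$, which forces a linear inequality on the $M_\gamma$'s. For the tropical Plücker relations, I would reduce to rank two: for any pair $i,j$ of vertices joined in the Dynkin diagram, one restricts attention to the face of $P(Z)$ lying over the corresponding $A_2$ Levi subgroup, and invokes the tropical Plücker identity established in the $SL_3$ case. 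Concretely this uses Berenstein--Zelevinsky-style tropicalization of the $3\times 3$ Plücker relation, applied to the $SL_3$ MV cycle obtained by intersecting $Z$ with the appropriate sub-loop-Grassmannian.

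For the converse direction, given data $(M_\gamma)$ satisfying the edge and tropical relations, I would show that the scheme $Y := \overline{\bigcap_{w\in W} S^w_{\lambda_w}}$ is nonempty and irreducible of the expected dimension, so it is a single MV cycle. Nonemptiness follows from the edge inequalities (which guarantee the prospective polytope is an honest convex polytope contained in the allowed region); irreducibility and the correct dimension count would be established by induction on the Bruhat order, peeling off one semi-infinite orbit at a time and using the tropical relations to control how the intersections behave at each step. Finally, uniqueness is clear since the $M_\gamma$'s are recovered from $Y$ as the vertex coordinates of its moment polytope.

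The principal obstacle, as I see it, is the second half: proving that the tropical Plücker relations suffice for $\bigcap_w \overline{S^w_{\lambda_w}}$ to be irreducible of the right dimension. Checking the relations are necessary is a direct polytope computation, but sufficiency requires the delicate $SL_3$ local model, where one must show that the tropical relation cuts out a single component of an $A_2$ semi-infinite intersection rather than the union of several. I would organize the argument around this rank-two reduction, making the $SL_3$ computation the technical heart of the proof.
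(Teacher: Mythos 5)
The paper does not prove this theorem: it is quoted from Kamnitzer \cite{kamnitzer2005}, so the comparison has to be against the actual argument there rather than against anything in this manuscript. Your outline captures the right shape of the forward direction (MV cycle $\to$ polytope data) but skips over the real technical inputs, and your proposal for the converse direction diverges substantially from what is known to work.

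Two specific concerns. First, the claim that the moment-map image is a polytope with vertex $\mu_w$ in direction $w\rho$ and that $Z \subset \overline{S^w_{\mu_w}}$ is not a consequence of an Atiyah--Guillemin--Sternberg statement in an ind-scheme setting; it is a nontrivial geometric fact about MV cycles proved via the GGMS stratification of the affine Grassmannian (Kamnitzer's Theorem 3.1, building on Anderson). Without that stratification argument, ``$Z$ lies in the closure of the $w$-orbit through its $w\rho$-extremal vertex'' is an assertion, not a deduction. Likewise, extracting the tropical Pl\"{u}cker relations is more than restricting to a face and quoting an $SL_3$ fact: the content is a tropicalization of the Chamber Ansatz / geometric crystal structure (Berenstein--Zelevinsky), applied to a generic point of the cycle, and this is the technical heart of the forward direction.

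Second, and more seriously, your plan for the converse --- showing $\bigcap_w \overline{S^w_{\lambda_w}}$ is irreducible of the right dimension by ``peeling off one semi-infinite orbit at a time'' along the Bruhat order --- is not the strategy used, and I do not see how it can be carried out. Arbitrary intersections of this form can be reducible or of the wrong dimension, and the tropical Pl\"{u}cker relations have no local, orbit-by-orbit interpretation that would control each step of such an induction. Kamnitzer's argument instead identifies the set of BZ data (pseudo-Weyl polytopes satisfying tropical Pl\"{u}cker) with Lusztig data via the $\mathbf{i}$-Lusztig datum / chamber-ansatz bijection, and then uses the already-established bijection between MV cycles and canonical basis elements, together with a counting/crystal argument, to conclude that the map polytope $\mapsto$ cycle is a bijection. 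Your sketch acknowledges that the converse is the hard part, but the mechanism you propose for it is not the one that closes the gap.
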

The data $(M_\gamma)_{\gamma \in \Gamma}$ determines a pseudo-Weyl polytope. It is called an MV polytope if the corresponding cycle $\overline {\bigcap_{w\in W}  S^{w}_{\lambda_{w}}} $ is an MV cycle.
MV polytopes are in bijection with MV cycles.
Using this description, Kamnizter \cite{kcrystal} reconstruct the crystal structure for MV cycles.

\begin{prop}[\cite{kcrystal}]
MV polytopes have a crystal structure isomorphic to $B(\infty)$.
\end{prop}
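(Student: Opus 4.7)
The plan is to define Kashiwara crystal operators $\tilde e_i, \tilde f_i$ directly on the BZ data $(M_\gamma)_{\gamma \in \Gamma}$ parametrizing MV polytopes, and then identify the resulting crystal with $B(\infty)$ by means of Kashiwara--Saito's characterization theorem. One could alternatively import the crystal structure geometrically from MV cycles through the geometric Satake equivalence, but the polytope side gives a fully combinatorial model that plugs directly into axiomatic characterizations of $B(\infty)$.

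Given an MV polytope $P$ with BZ datum $(M_\gamma)$, I would define $\tilde f_i P$ by decreasing $M_{\varpi_i}$ by one and propagating the change so that the resulting tuple still satisfies the edge inequalities and tropical relations from Theorem~1. Geometrically, this lengthens the bottom $i$-edge of $P$ by one; the chamber weights whose values must also drop are those of the form $w\varpi_i$ with $w$ indexing a vertex on the bottom $i$-face, and the remaining $M_\gamma$ are then forced uniquely by the tropical relations. The operator $\tilde e_i$ is the inverse procedure, defined whenever the relevant $i$-edge has positive length. The weight function reads off from the BZ datum as $\mathrm{wt}(P) = \sum_i M_{w_0 \varpi_i}\, w_0 \check\alpha_i$, while $\varepsilon_i(P)$ and $\varphi_i(P)$ are the lengths of the $i$-edges at the two extremal vertices of $P$.

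The essential verification is that the modified datum remains an MV BZ datum, i.e.\ continues to satisfy all tropical Pl\"ucker relations. Since each such relation is indexed by a rank-two sub-root-system together with a Weyl group element, this reduces to a finite combinatorial check on the two-faces of $P$ coming from $\mathrm{SL}_2$ and $\mathrm{SL}_3$ subsystems; in type $A_2$ the MV polytopes are explicit hexagons parametrized by Lusztig data, and the required identities are direct. Once $\tilde e_i, \tilde f_i$ are shown to satisfy the Kashiwara axioms, it remains to match the resulting crystal with $B(\infty)$. For this I would invoke Kashiwara--Saito's uniqueness theorem: a seminormal crystal generated by a highest-weight element of weight zero and carrying a compatible ``starred'' crystal structure is isomorphic to $B(\infty)$. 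On MV polytopes the Kashiwara involution corresponds to the central symmetry $P \mapsto -P$, i.e.\ at the level of BZ data $(M_\gamma) \mapsto (M_{-\gamma})$, and the highest-weight element is the point polytope $\{0\}$. The main obstacle I anticipate is the mixed axiom relating $\tilde f_i$ and $\tilde f_j^{\,*}$: unwinding this on the polytope side says that lowering at the top and bottom vertices of $P$ interact in a prescribed way, and the verification again reduces to the $A_2$ hexagon calculation already used for the tropical relations.
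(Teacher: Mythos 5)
The paper does not prove this statement: it is stated as a quoted result from Kamnitzer \cite{kcrystal}, with no argument supplied, so there is no in-paper proof to compare your attempt against. Your sketch is, however, a reasonable reconstruction of the strategy Kamnitzer actually uses in that reference: define $\tilde e_i,\tilde f_i$ on BZ data by perturbing one chamber-weight value and completing the rest via the tropical Pl\"ucker relations (which propagate along rank-two sub-systems), verify the crystal axioms by reduction to $A_1\times A_1$ and $A_2$, identify the Kashiwara involution with the point reflection $P\mapsto -P$ (up to translation), and conclude via the Kashiwara--Saito characterization of $B(\infty)$.

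Two imprecisions are worth flagging. First, with the normalization used in this paper the MV polytope is anchored at $\mu_e=0$, so $M_{\varpi_i}=0$ for every $i$ and ``decreasing $M_{\varpi_i}$ by one'' is not a well-defined move; the operator must perturb the datum near the opposite extreme vertex $\mu_{w_0}$, i.e.\ the values $M_{w\varpi_j}$ for $w$ near $w_0$, which is where the weight $\mathrm{wt}(P)=\sum_i M_{w_0\varpi_i}\,w_0\check\alpha_i$ actually lives. Second, $B(\infty)$ is not a seminormal crystal: $\tilde f_i$ never vanishes on it, so $\varphi_i$ is not a string length, and the Kashiwara--Saito uniqueness theorem is not the statement that a $*$-compatible seminormal crystal generated by a weight-zero element is $B(\infty)$. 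What is required is the specific list of conditions controlling $\varepsilon_i$, $\varepsilon_j^*$ and the commutation of $\tilde f_i$ with $\tilde f_j^*$ for $i\neq j$. Your instinct that these conditions again reduce to explicit $A_2$-hexagon computations is correct and is exactly how the verification is carried out in \cite{kcrystal}, but the axioms checked are the Kashiwara--Saito ones, not seminormality.
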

\subsection{Objects on the quiver side}
Let $Q=\{I,E\}$ be a Dynkin quiver of type ADE, 
where I is the set of vertices and E is the set of edges.
We double the edge set E by adding all the opposite edges. Let $E^*=\{a^* | a\in E\}$
where for $a:i\xrightarrow[]{}j, a^*=j\xrightarrow[]{}i$, also we define $s(a)=i,t(a)=j$. Define $\epsilon(a)=1 $ when $a\in E$, $\epsilon(a)=-1$, when $a\in E^*$.
Let $H=E\bigsqcup E^*$ and $\overline{Q}=\{I, H\}$.
The preprojective algebra $\Pi$ of $Q$ is defined as  quotient of the path algebra by a certain ideal:

$$
\Pi_{Q}= k\overline{Q} /<\sum_{a\in H} \epsilon(a)aa^*>.\footnote{Since most time we fix Q so Q is omitted when there is no confusion.}$$

A $\Pi_{Q}-$module is the data of an $I$ graded vector space $\bigoplus_{i\in I} M_i$ and linear maps $\phi_a : M_{s(a)} \xrightarrow[]{} M_{t(a)}$ for each $a\in H$ satisfying the preprojective relations $\sum_{a\in H,t(a)=i} \epsilon(a)\phi_{a}\phi_{a*}=0$.

Given a dimension vector $d\in \NN^I$, define $\Lambda(d)$ to be the variety of all representations of $\Pi$ on M for $M_i=k^{d_{i}}$.
\begin{prop}[\cite{lusztig1990canonical}, \cite{MR2892443}]
Irr$(\Lambda)$ has a crystal structure isomorphic to $B(\infty)$.
\end{prop}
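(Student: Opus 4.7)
The plan is to endow $\mathrm{Irr}(\Lambda(d))$ for each dimension vector $d \in \NN^I$ with crystal operators defined geometrically, then identify the resulting crystal with $B(\infty)$ via Kashiwara--Saito's characterization theorem. The starting ingredient is, for each $i \in I$ and $n \geq 0$, the locally closed stratum $\Lambda(d)_{i,n} \subset \Lambda(d)$ on which the cokernel of the map $\bigoplus_{a\in H,\, t(a)=i} M_{s(a)} \to M_i$ has dimension exactly $n$; geometrically, $n$ records the multiplicity of the simple $S_i$ in the top of the module $M$. For $Z \in \mathrm{Irr}(\Lambda(d))$, define $\varepsilon_i(Z)$ to be the unique $n$ such that $Z \cap \Lambda(d)_{i,n}$ is open dense in $Z$.

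Next, I would construct $\tilde f_i$ and $\tilde e_i$ by comparing the strata in different dimension vectors. The key geometric input is that the natural forgetful morphism $\Lambda(d+\alpha_i)_{i,n+1} \longrightarrow \Lambda(d)_{i,n}$, sending a module to its quotient by its maximal $S_i$-isotypic top summand, is a smooth surjection with connected fibers (in fact a Grassmannian bundle composed with an affine bundle). Consequently it induces a bijection between irreducible components of the two strata. Given $Z \in \mathrm{Irr}(\Lambda(d))$ with $\varepsilon_i(Z) = n$, one defines $\tilde f_i(Z)$ as the closure in $\Lambda(d+\alpha_i)$ of the component of $\Lambda(d+\alpha_i)_{i,n+1}$ that maps to the open part $Z \cap \Lambda(d)_{i,n}$, and $\tilde e_i$ symmetrically (returning $0$ when $\varepsilon_i(Z) = 0$). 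One also sets $\mathrm{wt}(Z) = -d$ and defines $\varphi_i(Z) = \varepsilon_i(Z) + \langle \mathrm{wt}(Z), \check\alpha_i\rangle$.

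Verification of the crystal axioms is then a matter of tracing the definitions through the bundle structures: $\tilde e_i$ and $\tilde f_i$ are mutually inverse on their domains of definition, and the compatibility with $\varepsilon_i, \varphi_i, \mathrm{wt}$ follows directly from the construction. For the final identification with $B(\infty)$, I would invoke the Kashiwara--Saito characterization: the crystal under consideration is upper-normal (built into the definition of $\varepsilon_i$), has a unique element of weight $0$ (the single point of $\Lambda(0)$), and every element is reached from this one by the $\tilde f_i$ (true because dimension vectors are bounded below and $\tilde e_i$ strictly decreases $d$). These three properties uniquely characterize $B(\infty)$.

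The substantial obstacle is verifying the remaining Kashiwara--Saito axiom comparing $\tilde f_i \tilde e_j$ with $\tilde e_j \tilde f_i$ when $i \neq j$; this requires analyzing the interaction of the two strata $\Lambda(d)_{i,\bullet}$ and $\Lambda(d)_{j,\bullet}$ and showing that passing to generic points commutes appropriately. The cleanest route is via Lusztig's semicanonical basis: each $Z \in \mathrm{Irr}(\Lambda(d))$ gives a constructible function whose integral against generic modules produces an element of $U(\fn)^*$, and these elements are dual to a basis of $U(\fn)_{-d}$ on which Kashiwara's operators act by the geometric $\tilde e_i, \tilde f_i$ defined above; this matches the crystal with $B(\infty)$ tautologically.
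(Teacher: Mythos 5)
The paper itself does not prove this proposition; it is quoted as background from Lusztig and Baumann--Kamnitzer (the crystal structure is really Kashiwara--Saito, built on Lusztig's stratifications), so there is no proof in the paper to compare against. I will assess your sketch on its own terms.

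Your overall strategy---stratify $\Lambda(d)$ by $\varepsilon_i$, build $\tilde e_i$ and $\tilde f_i$ from the geometry of the strata, and invoke the Kashiwara--Saito abstract characterization of $B(\infty)$---is exactly the right route. But the central geometric input, as you state it, is wrong. There is no morphism $\Lambda(d+\alpha_i)_{i,n+1} \to \Lambda(d)_{i,n}$ sending $M$ to ``$M$ with its $S_i$-head removed'': for $M\in \Lambda(d+\alpha_i)_{i,n+1}$, stripping the entire $S_i$-isotypic part of the head drops the $i$-th coordinate of the dimension vector by $n+1$, so the result lands in $\Lambda(d-n\alpha_i)_{i,0}$, not in $\Lambda(d)_{i,n}$. (There is also a terminological slip: the object obtained by discarding the $S_i$-head is the \emph{submodule} whose $i$-component is the image of all incoming arrows, not a quotient of $M$.) The correct argument, as in Kashiwara--Saito, fibers \emph{both} $\Lambda(d+\alpha_i)_{i,n+1}$ and $\Lambda(d)_{i,n}$ over the common base $\Lambda(d-n\alpha_i)_{i,0}$ by stripping the $S_i$-head, checks that both projections are smooth with irreducible (affine-bundle-over-Grassmannian) fibers, and only then transports irreducible components from one stratum to the other \emph{through} that base; alternatively one sets up a correspondence parametrizing pairs $N\subset M$ with $M/N\cong S_i$, since the choice of which $S_i$ to peel off is not canonical. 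Until this is repaired, the bijection of components, and hence $\tilde f_i$ and $\tilde e_i$, are not actually constructed.

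Your closing appeal to the semicanonical basis for the $i\neq j$ axiom also needs care: Lusztig's construction of the semicanonical basis already rests on the very stratification identities you are trying to establish, so quoting it wholesale risks circularity. If you go that route you should isolate precisely which constructible-function or Euler-characteristic statements you need and check that they are proved in Lusztig independently of the crystal isomorphism; otherwise it is cleaner to verify the remaining Kashiwara--Saito axiom directly by analyzing the interaction of the $i$- and $j$-stratifications, which is a finite computation on the double fibration above.
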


\subsection{A conjectural relation between MV cycles and modules over the preprojective algebra}
Baumann and Kamnitzer found an isomorphism between the crystal structure of Irr$(\Lambda)$ and MV polytopes. 
For each $\gamma\in \Gamma$, they define constructible funtion $D_{\gamma}: \Lambda(d) \xrightarrow[]{} \ZZ_{
{\geq0}}$\footnote{$\Lambda(d)$ and $D_{\gamma}$ do not depend on the direction of the edges in E.}. 
For any $M\in\Lambda(d)$, the collection $ (D_{\gamma})_{\gamma\in \Gamma}$ satisfies certain edge inequalities hence determines a polytope which we denote by $P(M)$.

\begin{theorem}[\cite{MR2892443}]
When $M$ is generic, $P(M)$ is an MV-polytope and for $d=(d_i)_{i\in I}$ this gives a map from Irr($\Lambda(d))$ to the set of MV polytopes of weight $\sum_{i\in I} d_i\alpha_i$. This map is a bijection compatible with the crystal structures.

\end{theorem}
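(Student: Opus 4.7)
The plan is to exploit that both $\mathrm{Irr}(\Lambda(d))$ and the set of MV polytopes of the appropriate weight carry crystal structures isomorphic to $B(\infty)$, by the two results already stated. If I can construct a map between them that (i) is well-defined, (ii) lands in MV polytopes, and (iii) commutes with crystal operators and sends highest weight to highest weight, then uniqueness of the crystal morphism $B(\infty)\to B(\infty)$ with these properties will force it to be the bijection sought, simultaneously yielding injectivity, surjectivity, and crystal compatibility.

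First I would verify that each $D_\gamma$ is generically constant on every irreducible component $Z\subset \Lambda(d)$ (immediate from constructibility of $D_\gamma$ and irreducibility of $Z$), and then define $P(Z) := (D_\gamma(M))_{\gamma\in\Gamma}$ for any $M$ lying in the dense open locus of $Z$ on which all $D_\gamma$ simultaneously attain their generic values. The edge inequalities reduce to a direct dimension count: interpreting $D_\gamma$ as the dimension of an iterated kernel/image inside $M$ cut out by compositions of the maps $\phi_a$ along a path in $\overline{Q}$ realizing $\gamma = w\varpi_i$, the comparison $D_\gamma + D_{s_\alpha \gamma}$ against its two neighbors across the $\alpha$-wall becomes a standard subadditivity of dimensions of submodule subquotients.

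The nontrivial step is showing that $(D_\gamma(M))_{\gamma}$ satisfies the tropical Pl\"ucker system, so that the resulting pseudo-Weyl polytope is genuinely MV. Since each tropical relation involves only chamber weights from a single rank-two subsystem, I would reduce to a type-$A_2$ configuration by using the reflection functors on $\Pi$, which implement the Weyl group action on $\Gamma$ and are expected to preserve (up to the natural shift) the functions $D_\gamma$. In the $A_2$ case the problem becomes a finite computation: take a generic module over the preprojective algebra of the $A_2$ quiver, produce explicit bases adapted to the three submodule filtrations that define the relevant $D_\gamma$'s, and read off the tropical Pl\"ucker identity. This $A_2$ verification, together with the proof that $D_\gamma$ transforms correctly under reflection functors, is where I expect the main obstacle to sit.

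To identify the crystal operators, Lusztig's $\tilde f_i$ on $\mathrm{Irr}(\Lambda)$ shifts the generic value of $\dim\mathrm{Hom}(S_i, M)$ by one, while the polytope $\tilde f_i$ lengthens the first edge of $P(M)$ in the direction $\check{\alpha}_i$ by one. Since one checks directly from the definition that $D_{\varpi_i}(M) = \dim\mathrm{Hom}(S_i, M)$ for generic $M$, the two operators match, and the component containing the zero module (the unique highest-weight element of $\mathrm{Irr}(\Lambda)$) maps to the degenerate polytope $\{0\}$ (the highest-weight MV polytope). Combined with the uniqueness principle for $B(\infty)$-morphisms fixing the highest weight, this completes the proof and delivers bijectivity as a consequence rather than a separate verification.
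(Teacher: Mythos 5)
This theorem is imported verbatim from Baumann and Kamnitzer \cite{MR2892443}; the present paper states it without proof and treats it purely as background for the construction of $X(M)$, so there is no in-paper argument for you to be compared against. Your sketch is nevertheless a fair high-level reconstruction of the strategy actually carried out in \cite{MR2892443}: evaluate the constructible functions $D_\gamma$ at a generic point of a component of $\Lambda(d)$, verify the edge inequalities by a dimension count, establish the tropical Pl\"ucker relations by reducing to rank two via reflection functors on $\Pi$-modules, and then use rigidity of $B(\infty)$ to promote a weight-preserving map that commutes with the $\tilde f_i$ and fixes the highest-weight element to the desired bijection. A few places where the outline is looser than the real argument and would need to be tightened: the rank-two reduction requires first proving that the reflection functor $\Sigma_i$ carries $D_\gamma$ to $D_{s_i\gamma}$ of the reflected module in the regime $\langle\gamma,\alpha_i\rangle\leq 0$ (this is precisely the statement the present paper quotes as Proposition 4.1 of \cite{MR2892443} inside its own proof of Lemma 2, so it is a substantial ingredient, not a formality); in simply-laced type the two-dimensional faces are of type $A_1\times A_1$ as well as $A_2$, so both cases of the tropical relation must be addressed; and the identification of $\tilde f_i$ on $\mathrm{Irr}(\Lambda)$ with the edge-lengthening operator on polytopes, together with your proposed formula for $D_{\varpi_i}(M)$, hinges on the exact normalization of the $D_\gamma$ in \cite{MR2892443}, which the present paper deliberately does not reproduce, so that part of your argument cannot be verified from what is quoted here and should be checked directly against the source.
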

We have MV-cycles (in bijection with MV-polytopes) as the geometric object on the loop Grassmannian side.
In order to upgrade the relations geometrically,
Kamnitzer-Knutson consider the quiver Grassmannian on the quiver side.

The quiver Grassmannian $Gr^{\Pi}(M)$ of a $\Pi$-module $M$ is defined as the moduli of submodules of $M$.

It is a subscheme of the moduli of $k$-vector subspaces of $M$ which is product of usual grassmannian $\prod_{i\in I} Gr(M_i)$. Here we will only consider $Gr^{\Pi}(M)$ with its reduced structure, and actually just as a topological space.
As the case of usual grassmannian, the
quiver Grassmannian $Gr^{\Pi}(M)$ is disjoint union of Grassmannians of different dimension vectors. Denote $Gr_{e}^{\Pi}(M)$ by the moduli of submodule N of M of dimension vector $e$, we have
$Gr_{e}^{\Pi}(M)\subset \prod_{i\in I} Gr_{e_i}(M_i)$.\\

Given a module $M\in \Lambda(d)$, form the subscheme\footnote{We will call it cycle in this paper.} $X(M)$=$\bigcap_{w\in W} \overline{S^{w}_{\lambda_{w}}}$, where
 $\lambda_w=\sum_{i\in I}-D_{-w\varpi_{i}}(M)w\check{\alpha{_i}}$. 
T acts on $S^{w}_{\lambda_w}$ by multiplication, hence it also acts on the closure and the intersection $X(M)$.\\

\begin{conj}
The ring of functions on the $T$-fixed point subscheme of $X(M)$ is isomorphic to the cohomology ring of the quiver grassmannian of $M$
$$\cO (X(M)^{T}) \xrightarrow[\sim]{\Psi} H^*(Gr^{\Pi}(M)).$$
More precisely, $X(M)^{T}$ is disjoint union of finite schemes $X(M)^{T}_{\nu}$ supported at $L_\nu$, $\nu\in X_{*}(T)$ and we can further identify two sides for each connnected component
$$\cO (X(M)_{\nu}^{T}) \xrightarrow[\sim]{\Psi} H^*(Gr_{e}^{\Pi}(M)),
\text{ where } e_i=(\nu, \varpi_i).$$
\end{conj}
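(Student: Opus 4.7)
The plan follows the section-by-section roadmap given in the introduction. I will present $\cO(X(M)^T)$ by explicit generators and relations, define $\Psi$ by sending those generators to Chern classes of tautological bundles on $Gr^{\Pi}(M)$, verify that the relations pass through $\Psi$, and finally, in the type~$A$ representation case, upgrade the resulting well-defined map to an isomorphism.

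For the presentation of $\cO(X(M)^T)$, the key observation is that the $T$-fixed points of $\cG(G)$ are precisely the lattices $L_\nu$, so $X(M)^T = \bigsqcup_\nu X(M)^T_\nu$ splits as a disjoint union of finite local schemes, with the constraint $\langle\nu,\varpi_i\rangle = e_i$ on the piece that should correspond to $Gr^{\Pi}_e(M)$. Near each $L_\nu$ the closed semi-infinite orbit $\overline{S^w_{\lambda_w}}$ is coordinatized by matrix entries in $N^w_\cK$, and passing to the $T$-fixed part of the intersection over $w\in W$ yields generators indexed by chamber weights $\gamma\in\Gamma$ together with polynomial relations coming from the edge inequalities and tropical relations that cut out the cycle. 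This is the content of Section~3, realizing the cycle as the loop Grassmannian with condition Y. On the cohomology side, $H^*(Gr^{\Pi}_e(M))$ is generated by the Chern classes $c_n(\cT_i)$ of the tautological subbundles $\cT_i$ whose fiber at $N\subset M$ is $N_i$, so $\Psi$ is determined on generators by matching the two families.

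The main obstacle is verifying that the Chern classes satisfy the same relations as the generators of $\cO(X(M)^T_\nu)$. The strategy, announced in Section~4, is to observe that each defining relation involves at most two simple roots, so it can be tested inside the rank-two Levi attached to that pair, reducing to a check for $SL_3$. In the $SL_3$ case, $Gr^{\Pi}(M)$ admits an auxiliary torus action arising from the endomorphism scalars of $M$, so its equivariant cohomology is computable by localization, and the desired identity among Chern classes becomes a finite combinatorial comparison of weights at $T$-fixed submodules, verifiable by GKM theory. I expect this $SL_3$ check to be the hardest step.

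Granting well-definedness of $\Psi$, the isomorphism in the type~$A$ representation case (Section~5) should follow by matching two explicit combinatorial descriptions: quiver Grassmannians of type-$A$ representations admit cellular decompositions, giving a known basis for $H^*(Gr^{\Pi}(M))$, while $\cO(X(M)^T)$ can be analyzed through the explicit type-$A$ MV-polytope combinatorics and the chamber-weight generators produced in step one. Surjectivity of $\Psi$ on generators together with an injectivity check at the $T$-fixed points via localization, combined with a dimension count on each graded piece, should then force $\Psi$ to be a bijection. Once the $SL_3$ relation check is in hand, the type-$A$ isomorphism reduces to this comparison of two explicit presentations.
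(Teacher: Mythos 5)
Your roadmap does follow the paper's overall architecture (present $\cO(X(M)^T_\nu)$ by generators and relations, send generators to Chern classes, reduce the relation-check to a two-vertex quiver and settle it by GKM, then prove bijectivity in type~$A$ by comparing bases and dimensions), but two of its central mechanisms are misdescribed in ways that would derail the argument if pursued as written, and the proposal also overstates what can currently be concluded.

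First, the presentation of $\cO(X(M)^T_\nu)$ in the paper is not ``generators indexed by chamber weights $\gamma$ with relations from edge inequalities and tropical relations.'' After identifying $\cG(T)$ with $(t^{\ZZ}\cdot K_-)^I$, the generators are the Taylor coefficients $a_{ij}$ (and the coefficients $b_{ij}$ of the inverses), one family for each vertex $i\in I$; it is the \emph{relations} that are indexed by $\gamma\in\Gamma$, and they are degree-truncation conditions of the form $\deg\bigl(\prod_i a_i^{\gamma_i^+}\prod_i b_i^{\gamma_i^-}\bigr)\le(\gamma,\nu)+D_{-\gamma}(M)$, obtained from the loop-Grassmannian-with-condition-$Y$ description of $\bigcap_w\overline{S^w_{\lambda_w}}$. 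Getting the indexing backwards makes the subsequent matching with Chern classes incoherent, since the natural target of $a_{ij}$ is $c_j(S_i)$ and of $b_{ij}$ is $c_j(Q_i)$, one bundle per vertex of the quiver, not per chamber weight.

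Second, the reduction to $SL_3$ is \emph{not} a restriction to a rank-two Levi attached to a pair of simple roots. A defining relation is indexed by a chamber weight $\gamma$, and $\gamma$ typically involves many simple roots. The reduction works by forming the $k(1\to2)$-module $M_\gamma=\bigoplus_{i\in I_\gamma^-}M_i^{\gamma_i^-}\to\bigoplus_{i\in I_\gamma^+}M_i^{\gamma_i^+}$, collapsing \emph{all} vertices with $\gamma_i>0$ into one node and all with $\gamma_i<0$ into the other, together with a map $\Phi:Gr^\Pi(M)\to Gr^{k(1\to2)}(M_\gamma)$ under which the relevant product of Chern classes pulls back from the $A_2$ picture. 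The Levi heuristic would not produce the needed inequality, because it does not see the full $\phi_\gamma$ nor $D_{-\gamma}(M)=\dim\ker\phi_\gamma$, which is the other half of the estimate. The $SL_3$/GKM computation itself uses an \emph{ad hoc} torus built from a Jordan-type splitting $M_1=\ker\phi\oplus J$, $M_2=\phi(J)\oplus K$, rather than a torus of endomorphism scalars of $M$; that choice is what makes the one-dimensional orbits tractable for the divisibility check $(t_s-t_{s'})\mid(h_{N_i}-h_{S_i})$.

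Finally, even granting all of this, what results is exactly what the paper proves: well-definedness of $\Psi$ in general, and isomorphism only when $M$ is a $kQ$-module of type~$A$ (where the type-$A$ surjectivity uses an affine paving of $Y\setminus Gr^\Pi_e(M)$, and the dimension count uses a Young-diagram recursion via Caldero--Chapoton). The conjecture for general $\Pi$-modules remains open, and nothing in your outline indicates how to close that gap; you should flag explicitly that the plan, even if executed, only establishes the partial result.
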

 
Remark: we define $X(M)$ as a scheme theoretic intersection of closures while MV-cycles have been defined as varieties (closure of intersections). We notice that $X(M)$ may be reducible even when $P(M)$ is an MV-polytope.
For an example, see the appendix\footnote{Not yet written, I will add this later on.}.
The former certainly contains the latter and a further hope is to relate the latter to some subvariety of the quiver Grassmannian.
\section{The T fixed point subscheme of the cycle}
We introduce some notation first.
It is known that the $T$-fixed point subscheme of the loop grassmannian of a reductive group $G$ is the   loop grassmannian of the Cartan $T$ of $G$, i.e.,
$\cG(G)^{T}=\cG(T)$.
We indentify $T$ with $I$ copies of the multiplicative group by
$T\xrightarrow[\sim]{\prod\varpi_i} G_{m}^{I}$
and this gives $\cG(T)\xrightarrow[\sim]{\prod\varpi_i} \cG(G_m)^{I}$.

For $\cG(G_m)$, we have
\begin{align}
&\cG(G_m)=G_m(\cO) \setminus G_m(\cK)\\
&=\{\text{unit} \in \cO\} \setminus \{\text{unit} \in \cK\}\\
&=t^{\ZZ}\cdot K_{-}
\end{align}
where $K_{-}$ is called the negative congruence subgroup (of $G_m$).
The $R$-points of $K_{-}$ can be described as:   $$K_{-}(R)=\{a=(1+a_1t^{-1}+...+a_mt^{-m})|   a_i \text{ is nilpotent in }R \}.$$
We define the degree function from $K_{-}(R)$ to $\ZZ_{\geq}$: deg$(a)=m$ if $a_m\neq 0$.

Then
$(\bigcap \overline{S^{w}_{\lambda_{w}}})^{T}$
is a subscheme of $\cG(G)^{T}\cong (t^{\ZZ}\cdot K_{-})^{|I|}$.
\begin{theorem}
Let $(\lambda_w)_{w\in W}$ be a collection of cocharacters such that $\lambda_v \geq_{w} \lambda_w$\footnote{This notation is used in \cite{kamnitzer2005}, $\lambda_v \geq_{w} \lambda_w$ whenever $w^{-1}\lambda_v \geq w^{-1}\lambda_w$.} for all $w\in W$ in which case we know (\cite{kamnitzer2005}) that $(\lambda_w)_{w\in W}$ determines a pseudo-Weyl polytope. The integers $A_{w\varpi_i}$ are well defined by $A_{w\varpi_i}=(\lambda_w,w\varpi_i)$. The R-points of $
(\bigcap \overline{S^{w}_{\lambda_{w}}})_{\nu}^{T}$ is the subset of R-point of $(t^{\ZZ}\cdot K_{-})^{|I|}$ containing elements 
$(t^{(\nu,\varpi_{i})}a_i) \in \prod (t^\NN \cdot K_-)^{|I|}|$ subject to the degree relations: $$deg(\Pi_{i\in I} a_i^{(\gamma, \check{\alpha_i})}) \leq -A_{\gamma}+\sum (\gamma,\nu)\text{ for all } \gamma \in \Gamma\}.$$

\end{theorem}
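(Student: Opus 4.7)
The plan is to combine two ingredients: a Plücker-style description of each closure $\overline{S^w_{\lambda_w}}\subset\cG(G)$ via matrix coefficients of fundamental representations, and a direct character computation on the $T$-fixed subscheme under the identification $\cG(G)^T \cong \prod_{j\in I}\cG(G_m)$ coming from the fundamental weights.

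First, for each $i \in I$ and each $w \in W$, I would introduce the Plücker function
\[
\Delta_{w\varpi_i}(g) \;=\; \langle \xi_{w\varpi_i},\, g\cdot v_{w\varpi_i}\rangle,
\]
where $v_{w\varpi_i}\in V(\varpi_i)$ is the extremal weight vector of weight $w\varpi_i$ (an eigenvector for $N^w$) and $\xi_{w\varpi_i}$ is its dual. The classical fact I would invoke is that $\overline{S^w_{\lambda_w}}$ is cut out scheme-theoretically by the conditions $\mathrm{val}(\Delta_{w\varpi_i})\geq A_{w\varpi_i}$ for all $i\in I$. Set-theoretically this specializes to the standard decomposition $\overline{S^w_\mu}=\bigsqcup_{\nu:(w\varpi_i,\nu)\geq A_{w\varpi_i}\forall i}S^w_\nu$, which already recovers the inequalities on each $T$-fixed point $L_\nu$.

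Next I would restrict these Plücker functions to the $T$-fixed locus. An $R$-point of $\cG(G)^T_\nu$ corresponds to an element $\tilde h\in T_\cK(R)$ with $\varpi_j(\tilde h)=t^{n_j}a_j$, where $n_j=(\nu,\varpi_j)$ and $a_j\in K_-(R)$. Since $\tilde h$ acts on $v_{w\varpi_i}$ by the character $(w\varpi_i)(\tilde h)$, and expanding $w\varpi_i=\sum_{j\in I}(w\varpi_i,\check{\alpha}_j)\varpi_j$ in the basis of fundamental weights gives
\[
\Delta_{w\varpi_i}(\tilde h)\;=\;\prod_{j\in I}(t^{n_j}a_j)^{(w\varpi_i,\check{\alpha}_j)}\;=\;t^{(w\varpi_i,\nu)}\prod_{j\in I}a_j^{(w\varpi_i,\check{\alpha}_j)}.
\]
Because $K_-(R)$ is closed under products and inverses, the factor $\prod_j a_j^{(\gamma,\check{\alpha}_j)}$ again lies in $K_-(R)$ of some degree $m$, so $\mathrm{val}(\Delta_\gamma(\tilde h))=(\gamma,\nu)-m$. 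The Plücker inequality $\mathrm{val}(\Delta_\gamma)\geq A_\gamma$ therefore reads
\[
\deg\!\Bigl(\prod_{j\in I}a_j^{(\gamma,\check{\alpha}_j)}\Bigr)\;\leq\;-A_\gamma+(\gamma,\nu)
\]
for every $\gamma=w\varpi_i\in\Gamma$, and intersecting over all $w,i$ produces the description in the statement.

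The main obstacle is the scheme-theoretic version of the first step: on the ambient ind-scheme $\cG(G)$ it is delicate to pin down the defining ideal of $\overline{S^w_{\lambda_w}}$ rather than just its support. Fortunately the theorem only needs this Plücker description after restriction to $\cG(T)$, where the sections decouple into a single function on each factor $\cG(G_m)$, and the problem reduces to the elementary fact that the scheme-theoretic closure of $\{t^N a : a\in K_-,\ \deg(a)\leq m\}$ inside $\cG(G_m)$ is cut out by the inequality $\deg\leq m$ on $t^N\cdot K_-$. Once this reduction is justified, the remainder is bookkeeping with the pairing between characters and cocharacters.
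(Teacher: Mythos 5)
Your computation on the $T$-fixed locus — identifying $\cG(T)$ with $\prod_j\cG(G_m)$ via fundamental weights, expanding $w\varpi_i=\sum_j(w\varpi_i,\check\alpha_j)\varpi_j$, and turning the valuation inequality into a degree inequality on $\prod_j a_j^{(\gamma,\check\alpha_j)}$ — is exactly the computation the paper carries out. The paper's embedding of $G/N^w$ into $\bigoplus_i V(\varpi_i)$ by $g\mapsto(g\cdot v_{w\varpi_i})_i$, followed by the requirement that the coefficients of $v_{w\varpi_i}$ extend over $d$, is precisely your Plücker valuation condition in different clothes. So the bookkeeping agrees.

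The gap is in your first ingredient and in the paragraph that tries to patch it. The paper obtains the scheme-theoretic defining equations of $\bigcap_w\overline{S^w_{\lambda_w}}$ from the loop-Grassmannian-with-condition formalism $\cG(G,Y)$: the identities $\cG(G,(G/N)^{aff})=\overline{S_0}$, $\cG(G\times T,(G/N)^{aff})_{red}=\bigsqcup\overline{S_\lambda}$, and the fiber product $\cG(G\times\prod_wT_w,\prod(G/N^w)^{aff})\times_{\cG(\prod T_w)}t^{\underline\lambda}$ are the inputs (cited from \cite{ivan3}) that pin down the scheme structure functorially. You instead assert that $\overline{S^w_{\lambda_w}}$ is cut out scheme-theoretically by $\mathrm{val}(\Delta_{w\varpi_i})\geq A_{w\varpi_i}$, correctly flag that this is the delicate point, and then claim the problem reduces to an internal closure statement in $\cG(G_m)$. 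That reduction does not work: you cannot recover the defining ideal of $\overline{S^w_{\lambda_w}}\cap\cG(T)$ from data living on $\cG(T)$ alone, because $S^w_{\lambda_w}\cap\cG(T)$ is just the single reduced point $L_{\lambda_w}$ ($N^w_\cK\cap T_\cK$ is trivial), so there is nothing to take a closure of inside $\cG(T)$. The nonreduced structure of $(\overline{S^w_{\lambda_w}})^T$ is pulled back from the defining equations on $\cG(G)$, which is exactly what you would still need to justify. The "elementary fact" you invoke about closing up $\{t^Na:\deg(a)\leq m\}$ in $\cG(G_m)$ is either circular or vacuous here — that set is already cut out by $\deg\leq m$ by construction, and it is not the restriction of $S^w_{\lambda_w}$ to $\cG(T)$. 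To close the gap you would need to either cite a scheme-theoretic Plücker description of $\overline{S^w_\mu}$ directly, or, as the paper does, pass through $\cG(G,(G/N^w)^{aff})$ and its functor of points.
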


\begin{proof}
We define loop grassmannian with a condition $Y$ and list the facts we need. For details, see \cite{ivan3}.
Let $G$ acts on scheme $Y$ and $y$ be a point in $Y$. Denote the stack quotient by $Y/G$.
Then $\cG(G,Y)$ is the moduli of maps of pairs from $(d, d^{*})$ to $(Y/G, y)$.
When $Y$ is a point we recover $\cG(G)$.
In general, $\cG(G,Y)$ is the subfunctor of $\cG(G)$ subject to a certain extension condition:$$
\cG(G,Y)=G_{\cO}\setminus \{g\in G_{\cK}\  |\  d^{*}\xrightarrow[]{g}G\xrightarrow[]{o} Y \text{ extends to $d$}\}, \text{ where } o(g)=gy.$$\\
We can realize semi-infinite orbits and their closures as follows:
\begin{itemize}

\item $\cG(G,G/N)=S_0$, where G acts G/N by left multiplication.
\item $\cG(G,(G/N)^{aff})=\overline{S_0}$, where "aff" means affinization.
\item $\cG(G\times T,(G/N)^{aff})_{red}=\bigsqcup \overline{S_{\lambda}}$, where "red" means the reduced subscheme. Here T acts on $G/N$ by left multiplication with the inverse and this extends to an action on $(G/N)^{aff}$. 
\item $$ \cG(G\times \prod_{w\in W} T_w, \prod_{w\in W} (G/N^w)^{aff})=\bigsqcup_{(\lambda_w)_{w\in W}} (\bigcap_{w\in W} \overline{S^{w}_{\lambda_{w}}})$$, We denote a copy of T corresponding to $w\in W$ by $T_w$.

\end{itemize}
A single cycle $\bigcap_{w\in W}\overline{ S^{w}_{\lambda_{w}}}$ can be written as the fiber product:
$$\bigcap_{w\in W}\overline{ S^{w}_{\lambda_{w}}}=\cG(G\times \prod_{w\in W} T_w, \prod (G/N^w)^{aff}) \times_{\cG(\prod_{w\in W} T_w)} (t^{\lambda_{w}})_{w\in W}.$$

In this fiber product, the morphism for the first factor is the second projection and 
the morphism for the second factor is the inclusion of the single point $t^{\underline{\lambda}}=(t^{\lambda_{w}})_{w\in W}$.\\
For a reductive group G, we have $\cG(G,Y)^{T}=\cG(T,Y)$, where T is the cartan of G.
\\
So, the T fixed point subscheme is \\ 
$$(\bigcap_{w\in W}\overline{ S^{w}_{\lambda_{w}}})^T=\cG(T\times \prod_{w\in W} T_w, \prod (G/N^w)^{aff}) \times_{\cG(\prod_{w\in W} T_w)} t^{\underline{\lambda}}.$$ \\
In terms of the above extension condition, this fiber product is:
$$(\bigcap_{w\in W}\overline{ S^{w}_{\lambda_{w}}})^T
=T(\cO)\setminus \{ g\in T_{\cK}, \text{ such that  } d^{*}\xrightarrow[]{g,t^{\underline{\lambda}} } T\times T^W \xrightarrow[]{}\prod (G/N^w)^{aff} \text {  extends to } d \}$$
This is the $T(\cO)$ quotient of the set of all $g\in T_{\cK}$, such that $$ d^{*}\xrightarrow[]{g,t^{\lambda_{w}} } T\times T_w \xrightarrow[]{} (G/N^w)^{aff} \text {  extends to }  d \text{ for all }w\in W.
$$
For $\gamma\in W\cdot \varpi_i \subset \Gamma$ , we fix weight vectors $v_{\gamma}  \text{ in the weight space} \
(V_{\varpi_i})_{\gamma} $ of $V_{\varpi_i}$. For each $w\in W$, we embed $G/N^w \text{ into } \bigoplus_{i\in I} V_{\varpi_i} \text{ by } g\mapsto (g\cdot  v_{w\varpi_i})_{i\in I}$. Under this embedding, $(G/N^w)^{aff}$ is a closed subscheme in $\bigoplus_{i\in I} V_{\varpi_i}$.
\\

For $g\in T_{\cK}$, $w\in W$, the composition $y_w(g)$ of the map : $$ d^{*}\xrightarrow[]{g,t^{\lambda_{w} }} T\times T_w \xrightarrow[]{} G/N^w \hookrightarrow \bigoplus V_{\varpi_i}$$is
$$y_{w}(g)=(g \cdot(t^{\lambda_{w}})^{-1})\sum_{i\in I} v_{w\varpi_i}= \sum_{i\in I} (w\varpi_i(g \cdot t^{-\lambda_w}))v_{w\varpi_i}.$$\\
This map extends to $d$ when for each $i\in I$, the coefficient of $v_{w\varpi_i}$ is in $\cO$.
The coefficient of $v_{w\varpi_i}$ is
\begin{align}
w\varpi_i(g \cdot t^{-\lambda_w})=w\varpi_i(g)\cdot w\varpi_i (t^{-\lambda_w})=w\varpi_i(g)\cdot t^{-(w\varpi_i,\ \lambda_w)}\notag\\
=w\varpi_i(g) t^{-A_{w\varpi_i}}
=\gamma(g) z^{-A_\gamma} \text{ where } \gamma=w\varpi_i\notag.
\end{align}

It follows that
$$(\bigcap_{w\in W}\overline{ S^{w}_{\lambda_{w}}})^T=T(\cO)\setminus 
\{g \in T(\cK);
\gamma(g) t^{-A_\gamma} \in \cO \text{ for all } \gamma \in \Gamma\}.$$
and the description of the R-points of $
(\bigcap \overline{S^{w}_{\lambda_{w}}})_{\nu}^{T}$ in the theorem follows when we identify $\cG(T)\xrightarrow[]{\prod\varpi_i} \cG(G_m)^{I}=(t^{\ZZ}\cdot K_{-})^{I}$. 
\\

\end{proof}
\subsection{Ring of functions on $
(\bigcap \overline{S^{w}_{\lambda_{w}}})_{\nu}^{T}$}
For an R-point $(t^{(\nu,\varpi_{i})}a_i)_{i\in I}$ of $
(\bigcap \overline{S^{w}_{\lambda_{w}}})_{\nu}^{T}$, let us write $a_i=1+a_{i1}t^{-1}+\cdots+ a_{im}t^{-m}$. When $\gamma=\varpi_i$, the degree inequality is deg$(a_i)\leq (\varpi_i,\nu)-A_{\varpi_i}$.
 We can take the coefficients $a_{ij}$ to be the coordinate functions on $
(\bigcap \overline{S^{w}_{\lambda_{w}}})_{\nu}^{T}$. Since deg$(a_i)\leq (\varpi_i,\nu)-A_{\varpi_i}$
 , there are finitely many $a_{ij}$s which generate the ring of functions on  $\cO((\bigcap \overline{ S^{w}_{\lambda_{w}}})_{\nu}^{T})$.\\
When we take inverse of $a_i$, it is computed in $K_{-}$ as $a_i^{-1}=1+\sum_{s\geq 0}(-1)^{i}(a_{i1}t^{-1}+\cdots+a_{im}t^{-m})^s$ and then expands in the form $\sum_{i}b_{ik} t^{-k}$, where $b_{ik}$ is the coefficient of $t^{-k}$ in $a_i^{-1}$. 
$$deg(\Pi_{i\in I} a_i^{(\gamma, \check{\alpha_i})}) \leq -A_{\gamma}+\sum (\gamma,\nu)\text{ for all } \gamma \in \Gamma.$$
is equivalent to the condition that the coefficient of the term $t^{-1}$ to the power $-A_{\gamma}+\sum (\gamma,\nu)+1$ in
$(\Pi_{i\in I} a_i^{(\gamma, \check{\alpha_i})})$ is 0. These coefficients are polynomials of $a_{ij}$s.
Set $b_i=1+\sum_{k}b_{ik} t^{-k}=a_i^{-1}$, 
add $b_{ij}$'s as generaters and also add the relations $a_{i}b_{i}=1$ for $i\in I$ which eliminate all $b_{ij}$'s.
For $\gamma \in \gamma$, let $\gamma_i=(\gamma, \check{\alpha_i})$.
Denote by $I_\gamma^{+}$ the subset of I containing all i such that $\gamma_i $ is positive and by $I_\gamma^{-}$ containing all i $\gamma_i$ is negative. Set $\gamma^{+}_i=\gamma_i$ when $\gamma_i$ is positive and $\gamma^{-}_i=-\gamma_i$ when $\gamma_i$ negative.

\begin{cor}
The ring of functions on $\cO((\bigcap \overline{ S^{w}_{\lambda_{w}}})_{\nu}^{T})$ is generated by $a_{ij}$'s and $b_{ik}$'s, for $i\in I$. The relations are degree conditions:
$$ deg(\prod_{i\in I_{\gamma}^{+}} 
a_{i}^{\gamma{_i}^{+}}\prod_{i\in I_{\gamma}^{-}} 
b_{i}^{\gamma{_i}^{-}})\leq (\gamma,\nu)-A_{\gamma}$$ for each $\gamma\in \Gamma$ and conditions $a_{i}b_{i}=1$ for each i in I.

\end{cor}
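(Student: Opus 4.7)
The plan is to reformulate the $R$-point description of Theorem~2 as a presentation of the coordinate ring by generators and relations. For each $i\in I$, parameterize a general element of $K_{-}(R)$ as $a_{i}=1+a_{i1}t^{-1}+\cdots +a_{im}t^{-m}$; the degree inequality coming from $\gamma=\varpi_{i}$ already forces $\deg(a_{i})\leq (\varpi_{i},\nu)-A_{\varpi_{i}}$, so only finitely many coefficients $a_{ij}$ can be nonzero, and these will serve as the first batch of generators. They cut out the factor $\prod_{i\in I}(t^{(\nu,\varpi_{i})}\cdot K_{-})\cap\{\deg(a_{i})\leq (\varpi_{i},\nu)-A_{\varpi_{i}}\}$ of the ambient $T$-fixed locus.

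The obstacle to reading off the remaining constraints directly from the $a_{ij}$'s is that $(\gamma,\check{\alpha_{i}})$ can be negative, so the product $\prod_{i}a_{i}^{(\gamma,\check{\alpha_{i}})}$ appearing in Theorem~2 involves inverses in $K_{-}$. I deal with this by introducing, for each $i$, a new formal generator $b_{i}=1+\sum_{k\geq 1}b_{ik}t^{-k}$ meant to be $a_{i}^{-1}$, and imposing the relation $a_{i}b_{i}=1$. Adjoining the $b_{ik}$'s together with these relations does not change the ring: since $a_{i}\in 1+t^{-1}R[[t^{-1}]]$, the relation $a_{i}b_{i}=1$ uniquely determines each $b_{ik}$ as a polynomial in $a_{i1},\ldots,a_{ik}$ (namely the coefficient of $t^{-k}$ in the formal expansion $1+\sum_{s\geq 1}(-1)^{s}(a_{i1}t^{-1}+\cdots)^{s}$), so the inclusion of the smaller polynomial ring into the larger one is an isomorphism.

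Having these $b_{i}$'s at hand, I split the index set $I$ according to the sign of $(\gamma,\check{\alpha_{i}})$: on $I_{\gamma}^{+}$ one has $a_{i}^{(\gamma,\check{\alpha_{i}})}=a_{i}^{\gamma_{i}^{+}}$, while on $I_{\gamma}^{-}$ one has $a_{i}^{(\gamma,\check{\alpha_{i}})}=b_{i}^{\gamma_{i}^{-}}$. The degree condition of Theorem~2 then rewrites literally as
\[
\deg\Bigl(\prod_{i\in I_{\gamma}^{+}} a_{i}^{\gamma_{i}^{+}}\prod_{i\in I_{\gamma}^{-}} b_{i}^{\gamma_{i}^{-}}\Bigr)\leq (\gamma,\nu)-A_{\gamma},\qquad \gamma\in\Gamma,
\]
and, as noted just before the corollary, each such inequality is equivalent to the vanishing of finitely many explicit polynomials in the $a_{ij}$'s and $b_{ik}$'s (the coefficients of $t^{-k}$ for $k>(\gamma,\nu)-A_{\gamma}$). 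Together with the relations $a_{i}b_{i}=1$, these are the defining equations.

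The main task is therefore only bookkeeping: one must check that the generators-and-relations description obtained above really presents the same functor on $R$-algebras that Theorem~2 presents. This reduces to two formal observations: $K_{-}$ is stable under inverses, and the inverse of $1+a_{i1}t^{-1}+\cdots+a_{im}t^{-m}$ computed in $K_{-}$ has coefficients exactly the $b_{ik}$'s determined by $a_{i}b_{i}=1$. No serious obstacle arises; the substantive content is entirely in Theorem~2, and the corollary is its coordinate-ring repackaging.
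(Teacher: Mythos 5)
Your proposal is correct and follows essentially the same route as the paper: read off the $a_{ij}$ from the $R$-point description of Theorem~2, use the $\gamma=\varpi_i$ inequalities to bound their number, adjoin the inverse coefficients $b_{ik}$ with the relations $a_ib_i=1$ (which eliminate them), and split each product $\prod_i a_i^{(\gamma,\check\alpha_i)}$ into $a_i$-factors and $b_i$-factors by the sign of $\gamma_i$. Your observation that the $b_{ik}$ are uniquely determined (via the geometric series, with the sign correctly as $(-1)^s$) just makes explicit what the paper leaves implicit, and the rest is the same bookkeeping.
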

\section{Construction of the map $\Psi$ from functions to cohomology}
\subsection{Map $\Psi$}
For $M\in \Lambda(d)$, to apply corollary 1 to $X(M)$, we set $A_\gamma=-D_{-\gamma}(M)$. Then
 $$\cO(X(M)_{\nu}^{T})
=k[a_{ij},b_{ik}]/I(M)$$
where $I(M)$ is the ideal generated by the degree conditions: $$ deg(\prod_{i\in I_{\gamma}^{+}} 
(a_i)^{\gamma_i^+}\prod_{i\in I_{\gamma}^{-}} 
(b_i)^{\gamma_{i}^{-}})\leq (\gamma,\nu)+D_{-\gamma}(M)$$ for each $\gamma\in \Gamma$ and the conditions $a_{i}b_{i}=1$ for each i in I.

The conjecture 
$\cO (X(M)_{\nu}^{T}) \cong H^*(Gr_{e}^{\Pi}(M))$,
where $e_i=(\nu, \varpi_i)$,
is now equivalent to

$$k[a_{ij},b_{ik}]/I(M)
\cong H^*(Gr_{e}^{\Pi}(M)).$$ 

The quiver Grassmannian $Gr_{e}^{\Pi}(M))$ is a subvareity of 
$\prod_{i\in I} Gr_{e_{i}}(M_i)$ and we have on each $Gr_{e_{i}}(M_i)$ the tautological subbundle $S_i$ and quotient bundle $Q_i$. We pull back $S_i$ and $Q_i$ to $\prod_{i\in I} Gr_{e_{i}}(M_i)$ and denote their restrictions  on $Gr_{e}^{\Pi}(M))$ still by $S_i$ and $Q_i$ by abusing notion.
For a rank n bundle E, denote the Chern class by $c(E)$  and the $i^th$ Chern class $c_i(E)$, where $c(E)=1+c_1(E)+\cdots+c_n(E) $.
We want to define the map $$\Psi: \cO (X(M)_{\nu}^{T}) \xrightarrow[]{} H^*(Gr_{e}^{\Pi}(M)),
\text{ where } e_i=(\nu, \varpi_i),$$ by mapping the generators $a_{ij}$ to $ c_{j}(S_i)$ and 
$b_{ij}$ to $c_{j}(Q_i)$.

\begin{theorem}
The map $\Psi$ described above is well defined.

\end{theorem}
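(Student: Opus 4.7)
The plan is to verify that the two families of defining relations of $k[a_{ij},b_{ik}]/I(M)$ are both satisfied by the Chern class substitution $a_{ij}\mapsto c_j(S_i)$, $b_{ij}\mapsto c_j(Q_i)$. There are the \emph{inverse relations} $a_i b_i = 1$ and the \emph{degree relations} indexed by chamber weights $\gamma \in \Gamma$.

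First I would dispose of the inverse relations. On each factor $Gr_{e_i}(M_i)$ (and hence on $Gr_e^\Pi(M)$ by restriction) the tautological subbundle $S_i$ and quotient bundle $Q_i$ sit in the short exact sequence $0 \to S_i \to \underline{M_i} \to Q_i \to 0$, where $\underline{M_i}$ denotes the trivial bundle of rank $d_i$. The Whitney sum formula gives $c(S_i)\,c(Q_i) = c(\underline{M_i}) = 1$, which is exactly $\Psi(a_i)\Psi(b_i) = 1$.

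Next I would treat the extremal cases of the degree relations. When $\gamma = \varpi_i$ the relation reads $\deg(a_i) \le e_i$, which holds because $c_j(S_i) = 0$ for $j > \mathrm{rk}(S_i) = e_i$; similarly, for $\gamma = w_0 \varpi_i$ one gets $\deg(b_i) \le d_i - e_i$, which holds by the rank of $Q_i$. This also forces the hypothesis $(\gamma,\nu) + D_{-\gamma}(M) \ge 0$ to be non-negative for these extremal $\gamma$, which is already implicit in the fact that $Gr_e^\Pi(M)$ is nonempty only for the dimension vectors $e$ in question. For the remaining $\gamma \in \Gamma$ the right-hand side $(\gamma,\nu) + D_{-\gamma}(M)$ is the Baumann--Kamnitzer constructible function, and this is where the geometry of the preprojective relations enters.

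The main step, and where the difficulty lies, is the degree relation for a general chamber weight $\gamma$. As indicated in the introduction, my strategy is to reduce to an $SL_3$ calculation. Any $\gamma \in \Gamma$ is conjugate to $\varpi_i$ by a Weyl group element whose reduced expression encodes a sequence of adjacent simple reflections; the degree relation can be expressed in terms of the rank-two Levi subdiagrams appearing along such a reduced word, and the multi-step relation then follows by chaining the rank-two relations. Thus it suffices to verify the degree inequality in the case $\fg = \mathfrak{sl}_3$ with $\gamma$ a non-extremal chamber weight. In that case $Gr^\Pi(M)$ for a $\Pi$-module of type $A_2$ carries an auxiliary torus action with isolated fixed points and one-dimensional orbits, so GKM-type equivariant localization applies. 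At each fixed point the restrictions of $S_i$ and $Q_i$ split as sums of character line bundles coming from composition factors of the submodule and quotient module, and the degree inequality in $H^*$ follows by specialization from an inequality of equivariant Chern polynomials checked fixed-point-by-fixed-point. The most delicate input is that the constructible function $D_{-\gamma}$ is realized as the correct bound on these equivariant degrees, which uses the explicit form of $D_\gamma$ as computed in \cite{MR2892443} on rank-two subquivers.

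The principal obstacle is not either relation individually but the reduction to $SL_3$: one must check that the degree relations for general $\gamma$ are actually generated by their rank-two instances, and that the GKM analysis on the $A_2$-type quiver Grassmannian produces exactly the bound $D_{-\gamma}(M)$ rather than a weaker one. Once this matching is in place, well-definedness of $\Psi$ is immediate.
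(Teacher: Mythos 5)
You correctly handle the inverse relations via the Whitney sum formula and the extremal degree bounds from the ranks of $S_i$ and $Q_i$, and you correctly anticipate the use of an auxiliary torus action and GKM localization on an $A_2$-type quiver Grassmannian. But your reduction mechanism is not the paper's, and as written it is not actually a mechanism: passing along a reduced word to ``rank-two Levi subdiagrams'' and then ``chaining'' their degree relations is left unspecified, and it is not at all clear that the degree relation for a general $\gamma$ is generated by rank-two Levi instances in this sense. The paper's reduction is a single direct pullback. For each chamber weight $\gamma$ one forms the auxiliary $k(1\to 2)$-module
$M_\gamma=\oplus_{i\in I_\gamma^-}M_i^{\gamma_i^-}\xrightarrow{\phi_\gamma}\oplus_{i\in I_\gamma^+}M_i^{\gamma_i^+}$,
where $\phi_\gamma=\oplus\phi_{ij}$ is built from compositions of the structure maps along paths in $Q$. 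This $M_\gamma$ aggregates \emph{all} vertices in $I_\gamma^\pm$ with multiplicities; it is emphatically not a rank-two Levi subsystem of $\fg$, even though the target quiver $1\to 2$ is of type $A_2$. One then takes the map $\Phi:Gr^\Pi(M)\to Gr^{k(1\to2)}(M_\gamma)$, $N\mapsto(\oplus_{i\in I_\gamma^-}N_i,\,\oplus_{i\in I_\gamma^+}N_i)$, and pulls back Chern classes; functoriality converts the $A_2$ inequality of Lemma~1, namely $c_p(S_2\oplus Q_1)=0$ for $p>e_2-e_1+\dim\ker\phi$, into exactly the degree relation for $\gamma$.

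You also omit the second half of the argument. Lemma~1's GKM analysis bounds the degree by $\dim\ker\phi_\gamma$, not by $D_{-\gamma}(M)$; the identification $\dim\ker\phi_\gamma=D_{-\gamma}(M)$ is a separate nontrivial statement (Lemma~2), proved in the appendix by induction along a $j$-admissible reduced expression using the Baumann--Kamnitzer reflection functors $\Sigma_i$. Reduced expressions do enter the paper's proof, but precisely there, in the combinatorics of the constructible function $D_\gamma$, and not as a way to geometrically decompose the Chern-class inequality. Without the map $\Phi$ and without Lemma~2, your proposed argument does not close.
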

\subsection{Two lemmas}
For the proof, we need two lemmas. Lemma 1 is the special case of theorem 4 when $Q$ is the quiver $1\xrightarrow[]{}2$
and $M$ is a $kQ$-module.
\begin{lemma}
 Let $Q$ be the quiver $1\xrightarrow[]{}2$
and $M$ be $\CC^{d_1}\xrightarrow[]{\phi} \CC^{d_2} $.On $X=Gr^{\Pi}_{e}(M)$, we have $c_i(S_2\oplus Q_1)=0 $ when $i> e_2-e_1+\text{dim}(ker\phi)$.

\end{lemma}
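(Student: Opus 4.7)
The plan is to exhibit a vector bundle $E$ on $X$ of rank $e_2 - e_1 + \dim(\ker\phi)$ whose total Chern class equals $c(S_2\oplus Q_1)$; the lemma then follows immediately since Chern classes of a bundle vanish in degrees exceeding its rank.

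Fix a vector-space splitting $M_1 = K \oplus V'$ where $K = \ker\phi$ and $V'$ is any complementary subspace, so $\phi$ restricts to an isomorphism $V' \to \mathrm{Im}\,\phi$. I would then consider the morphism of vector bundles on $X$
\[
\alpha : S_1 \longrightarrow K \oplus S_2, \qquad s \longmapsto (s_K,\, \phi(s)),
\]
where $K$ is viewed as a trivial bundle of rank $\dim K$ on $X$ and $s_K$ denotes the first component of $s \in S_1 \subset M_1$ under the splitting. The map is well-defined because the submodule condition $\phi(N_1) \subset N_2$ defining points $(N_1, N_2) \in X$ forces $\phi(S_1) \subset S_2$. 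Next, I would check that $\alpha$ is fiberwise injective: if $\alpha(n)=0$ at some point, then $n_K = 0$ places $n$ in $V'$ and $\phi(n)=0$ places $n$ in $K$, so $n \in V'\cap K = 0$. Hence $\alpha$ is a subbundle inclusion, and the cokernel $E := (K\oplus S_2)/S_1$ is a vector bundle on $X$ of rank $\dim K + e_2 - e_1$.

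Applying the Whitney product formula to $0\to S_1 \to K\oplus S_2 \to E \to 0$ (with $K$ trivial) gives $c(E) = c(S_2)/c(S_1)$; applying it to $0 \to S_1 \to M_1 \to Q_1 \to 0$ (with $M_1$ trivial) gives $c(Q_1) = c(S_1)^{-1}$. Combining these,
\[
c(S_2 \oplus Q_1) \;=\; c(S_2)\,c(Q_1) \;=\; c(S_2)/c(S_1) \;=\; c(E),
\]
so $c_i(S_2 \oplus Q_1) = c_i(E) = 0$ for $i > \mathrm{rk}(E) = e_2 - e_1 + \dim(\ker\phi)$. The only substantive verification is the fiberwise injectivity of $\alpha$, which reduces to the elementary fact $V'\cap K = 0$ together with the defining condition $\phi(S_1)\subset S_2$ of the quiver Grassmannian; I anticipate no serious obstacle.
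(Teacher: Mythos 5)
Your proof is correct, and it takes a genuinely different---and substantially simpler---route than the paper. The paper proves Lemma~1 by equipping $X$ with a torus action (built from a splitting $M_1 = I\oplus J$, $M_2 = \phi(J)\oplus K$), invoking the affine paving of Lemma~3 to get equivariant formality, and then running a GKM/localization argument: over $X^T$ it decomposes $S_2\oplus Q_1$ pointwise as $E_1\oplus E_2$, notes $E_2$ has the right rank so its high Chern classes vanish, and verifies the GKM divisibility conditions along the one-dimensional orbits to show $c_T(E_1)$ lifts to $H^*_T(X)$. Your approach bypasses all of this machinery. Rather than trying to split $S_2\oplus Q_1$ on $X$ itself (which, as the paper remarks after $\S 4.3$, cannot be done), you produce an honest vector bundle $E$ on $X$ of rank $e_2-e_1+\dim\ker\phi$ whose total Chern class equals $c(S_2\oplus Q_1)$: the fiberwise-injective bundle map $\alpha\colon S_1\to K\oplus S_2$, $s\mapsto(s_K,\phi(s))$, yields $0\to S_1\to K\oplus S_2\to E\to 0$, and the Whitney formula together with $c(Q_1)=c(S_1)^{-1}$ gives $c(S_2\oplus Q_1)=c(S_2)/c(S_1)=c(E)$. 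The only verification is the one-line injectivity check from $V'\cap K=0$ and the submodule condition $\phi(N_1)\subset N_2$; since $X$ is taken with its reduced structure, constant fiberwise rank of $\alpha$ suffices to make $E$ locally free. Your argument thus makes no use of the torus action, the affine paving, or GKM theory---those tools remain needed elsewhere in the paper (notably for surjectivity of $\Psi$ in $\S 5$), but Lemma~1 itself doesn't require them, and your proof also works without the type-$A$/$kQ$-module hypotheses that the paving argument depends on.
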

Let $\phi_{ij}: M_i\xrightarrow[]{} M_j$ be the composition of $\phi_a$ where a travels over the unique no going-back path which links i and j.  
Let $M_\gamma= \oplus_{i\in I_{\gamma}^{-}}M_i^{\gamma^{-}} \xrightarrow[]{\phi_{\gamma}=\oplus\phi_{ij}}\oplus_{i\in I_\gamma^{+}}M_i^{\gamma^{+}}$ be the module over $k(1\xrightarrow[]{}2)$.

\begin{lemma}
For a $\Pi$-module $M$ and any chamber weight $\gamma$, we have
 $$dim(ker\phi_{\gamma})= D_{-\gamma}(M).$$
\end{lemma}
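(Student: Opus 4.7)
The plan is to reduce the claim to the Baumann-Kamnitzer characterization of $D_{-\gamma}$ on preprojective algebra modules. Write $-\gamma = w\varpi_{i_0}$ for some $w \in W$ and $i_0 \in I$, and fix a reduced expression $w = s_{j_1}\cdots s_{j_\ell}$. Baumann-Kamnitzer express $D_{-\gamma}(M)$ via a sequence of kernel and image operations indexed by this reduced word, yielding a dimension of a specific subspace of $M$ built from compositions of the structural maps $\phi_a$. My goal is to identify this subspace with $\ker\phi_\gamma$.

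First I would verify the base case $-\gamma = \varpi_{i_0}$ by hand: then $I_\gamma^\pm$ and the multiplicities take their simplest values, $\phi_\gamma$ becomes an explicit direct sum of the structural maps emanating from $i_0$, and both sides reduce to the same concrete dimension. For the inductive step I would pick a simple reflection $s_j$ with $\ell(s_j w) < \ell(w)$ and compare $\phi_\gamma$ with $\phi_{s_j\gamma}$. Because $Q$ is a tree (type ADE), any path in $\overline{Q}$ crossing the vertex $j$ factors uniquely through $M_j$, and the preprojective relation $\sum_{a:t(a)=j}\epsilon(a)\phi_a\phi_{a^*} = 0$ provides a linear dependence among the block entries of $\phi_\gamma$ that pass through $j$. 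This dependence assembles into an exact sequence comparing $\ker\phi_\gamma$ with $\ker\phi_{s_j\gamma}$, and the resulting change in dimension matches exactly the wall-crossing rule that Baumann-Kamnitzer establish for $D_{-\gamma}$.

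The main obstacle will be the combinatorics of signs and multiplicities in the partition $I = I_\gamma^+ \sqcup I_\gamma^-$. In types $D$ and $E$ the multiplicities $\gamma_i^\pm$ can exceed $1$, so $\phi_\gamma$ is a genuine block matrix of path compositions rather than a single composition, and one must verify that the preprojective relations along the entire Dynkin tree (not just at one branch point) cut out exactly the right portion of the naive kernel. I expect that choosing the reduced expression for $w$ adapted to a walk through the support of $\gamma$, starting at $i_0$ and moving outward, will keep the bookkeeping manageable; this adapted choice also ties in naturally with the construction of $\phi_\gamma$ via the unique no-going-back paths in $\overline{Q}$.
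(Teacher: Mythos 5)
Your overall architecture — induction on $\ell(w)$ where $-\gamma=w\varpi_{i_0}$, with the base case $\varpi_{i_0}$ checked directly and the preprojective relation at a chosen vertex driving the inductive step — is the right shape, and the paper's proof does proceed this way. But there is a genuine gap at the heart of your inductive step. You propose an exact sequence comparing $\ker\phi_\gamma$ with $\ker\phi_{s_j\gamma}$ \emph{on the same module} $M$, and assert this "matches the wall-crossing rule that Baumann--Kamnitzer establish for $D_{-\gamma}$." No such rule on a fixed module exists. The relevant result (Proposition 4.1 of Baumann--Kamnitzer, which the paper uses) compares $D_{-\gamma}(M)$ with $D_{-s_j\gamma}(\Sigma_j M)$ where $\Sigma_j$ is the reflection functor: the module changes. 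Accordingly the induction hypothesis is applied not to $M$ but to $\Sigma_j M$, and the real work is identifying $\ker\phi_\gamma$ inside $M$ with $\ker\phi_{s_j\gamma}$ inside $\Sigma_j M$. Your intuition that the preprojective relation $\sum_{t(a)=j}\epsilon(a)\phi_a\phi_{a^*}=0$ is the mechanism is correct — it is precisely what defines the $j$-th component of $\Sigma_j M$ as $\ker(\xi\colon M_A\to M_j)$ — but without introducing $\Sigma_j M$ you cannot invoke the Baumann--Kamnitzer identity, and the "exact sequence" you want cannot close up into the desired equality of dimensions on $M$ alone.

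A second, smaller omission: to apply the reflection identity you need the sign condition $\langle\gamma,\check\alpha_j\rangle\leq 0$ for the chosen reflection (equivalently, $\langle\beta,\check\alpha_j\rangle\geq 0$ for $\beta=s_j\gamma$). The paper spends a full lemma in the appendix showing that every reduced expression of $w$ is "admissible," i.e.\ that the sign condition holds at each step, via a braid-relation argument specific to simply-laced Dynkin diagrams. Your proposal takes any $s_j$ with $\ell(s_jw)<\ell(w)$ without verifying this sign, which is exactly the condition that lets you apply the reflection result. This is provable, but it is not free, and it must be stated. Finally, once you do bring in $\Sigma_j M$, the multiset bookkeeping you flag as "the main obstacle" does appear, and the paper handles it by an explicit rewriting of $I_\gamma^{\pm}$ and $I_{s_j\gamma}^{\pm}$ as multisets with the vertices adjacent to $j$ redistributed — so your instinct that this is where the care is needed is right, but it happens between $M$ and $\Sigma_j M$, not within $M$.
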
 
Lemma 2 is a property of $D_{\gamma}$ and will be proved in the appendix.
\subsection{Proof of theorem 4 from lemmas in $\mathsection$ 4.2}
\begin{proof}[Proof of theorem 4]
We prove the theorem can be reduced to lemma 1.

For each $\gamma \in \Gamma$,we have to prove the degree inequilities carry over to Chern classes:
$$\Psi(\prod_{i\in I_{\gamma}^{+}} t_i^{\gamma_{i}^+}\prod_{i\in I_{\gamma}^{-}} s_i^{\gamma_{i}^{-}})=\prod_{i\in I_{\gamma}^{+}} c(S_i)^{\gamma_{i}^+}\prod_{i\in I_{\gamma}^{-}} c(Q_i)^{\gamma_{i}^{-}}\leq D_{w_0\gamma}(M)+(\nu,\gamma). $$

Define a map $\Phi$ from $Gr^{\Pi}(M)$ to $Gr^{k(1\xrightarrow[]{}2)}(M_\gamma)$: for $N\in Gr^{\Pi}(M)$,
$\Phi(N)=\oplus_{i\in I_{\gamma}^{-}}N_i \xrightarrow[]{\phi_{\gamma}}\oplus_{i\in I_\gamma^{+}}N_i$. We have 
$$\Phi^{*}(c(S_2)c(Q_1))=c(\Phi^{*}(S_2))c(\Phi^{*}(Q_1))= c(\oplus_{i\in I_{\gamma}^{+}}S_i^{\gamma_{i}^{+}})c(\oplus_{i\in I_{\gamma}^{-}}Q_i^{\gamma_{i}^{-}})$$

$$=\prod_{i\in I_{\gamma}^{+}} c(S_i)^{\gamma_{i}^{+}}\prod_{i\in I_{\gamma}^{-}} c(Q_i)^{\gamma_{i}^{-}}.$$
Apply lemma1 to $M_{\gamma}$ we have $$deg(c(Q_1)c(S_2)) \leq dimker(\phi_{\gamma})+\sum_{i\in I_{\gamma}^{+}}\gamma_i e_i-\sum_{i\in I_{\gamma}^{-}}\gamma_i e_i$$
$$=dimker(\phi_{\gamma})+\sum_{i\in I}\gamma_i e_i=dimker(\phi_{\gamma})+(\gamma,\nu).$$
Then the theorem follows by lemma 2.
\end{proof}

Chern class vanishes in certain degree when the bundle contains a trivial bundle of certain degree but the desired trivial bundle in $Q_1\oplus S_2$ does not exist.
The idea is to pass to T-equivariant cohomology. Over $X^T$ which is just a union of isolated points we will decompose $Q_1\oplus S_2$ into the sum of the other two bundles $E_1$ and $E_2$ pointwisely, where $E_{1}$ will play the role of trivial bundle. Although there is no bundle over X whose restriction is $E_2$, there exist T-equivariant cohomology class in $H_{T}^{*}(X)$ whose restriction on $X^T$ is the T-equivariant Chern class of $E_2$. 
\subsection{Recollection of GKM theory}
We first recall some facts in  T-equivariant cohomoloy theory.

We follow the paper \cite{tymoczko2005introduction}.
Denote a n-dimensional torus by $T$, topologically $T$ is homotopic to$ (S^1)^n$. Take $ET$ to be a contractible space with a free
$T$-action.
Define $BT$ to be the quotient $ET/T$.
The diagonal action of $T$ on $X \times ET$ is free, since the action
on $ET$ is free.  Define $X \times_T ET$ to be the quotient 
$(X \times ET) / T$.  
We define the equivariant cohomology of $X$ to be
\[H^*_T(X) = H^*(X \times_T ET).\]

When $X$ is
a point and $T=G_m$,
$$  
H^*_T(X) = H^*(\textup{pt} \times_T ET) = 
H^*(ET/T)=H^*(BT)=
H^*(\mathbb{CP}^{\infty})\cong k[t].$$ When $T=(S^1)^n$,
\begin{align}
H^*(pt)=k[t_1,\cdots,t_n]\cong S(\mathfrak{t}^*).
\end{align}
So we can identify any class in $H^*(pt)$ as a function on the lie algebra $\mathfrak{t}$ of $T$.
The map $X \xrightarrow[]{}pt$ allows us to pull back each class in
$H^*_T(\textup{pt})$ to $H^*_T(X)$, so $H^*_T(X)$ is a
module over $H^*_T(\textup{pt})$.  

Fix a projective variety $X$ with an action of $T$. We say that $X$ 
is equivariantly formal with respect to this $T$-action if
$E^2=E^\infty$ in the spectral sequence associated to the fibration
$X \times_T ET \longrightarrow BT$.

When $X$ is equivariantly formal with respect to $T$, 
the ordinary cohomology of $X$ can be reconstructed from its
equivariant cohomology. Fix an inclusion map  
$X\xrightarrow[]{}X \times_T ET$, we have
the pull back map of cohomologies: $H^*(X \times_T ET\xrightarrow[]{i} H^*(X)$.
The kernel of $i$ is $\sum_{s=1}^{n} t_s\cdot H^*_T(X)$, where $t_s$ is the generator of $H^*_T(pt)$ (see (4)) and we view it as an element in $H^*_T(X)$ by pulling back the map $X\xrightarrow[]{}pt$. Also $i$ is surjective so
$H^*(X) = H^*_T(X)/ ker(i)$.

If in addition $X$ has finitely many fixed points and finitely many one-dimensional orbits, 
Goresky, Kottwitz, and MacPherson show that
the combinatorial data encoded in the graph of fixed points
and one-dimensional orbits of $T$ in $X$
implies a particular algebraic characterization of $H^*_T(X)$.

 \begin{theorem}[GKM, see \cite{tymoczko2005introduction}, \cite{goresky1997equivariant}]
Let $X$ be an algebraic variety with a $T$-action with respect to which
$X$ is equivariantly formal, and which has 
finitely many fixed points and finitely
many one-dimensional orbits.  Denote the one-dimensional orbits $O_1$, $\ldots$, $O_m$.  For each
$i$, denote the the $T$-fixed points of $O_i$ by $N_i$ and $S_i$ and denote the 
stabilizer of a point in $O_i$ by $T_i$. Then the map
$H^{*}_{T}(X)\xrightarrow[]{l}H^{*}_{T}(X^T)=\oplus_{p_{i}\in X^{T}}H^*_{T}(p_{i})$  is injective
and its image is 
$$\left\{f=(f_{p_1}, \ldots, f_{p_m}) 
\in \bigoplus_{\textup{fixed pts}} S(t^*):  f_{N_i}|_{\mathfrak{t}_i} = 
f_{S_i}|_{\mathfrak{t}_i} \textup{ for each }i=1,\ldots,m \right\}.$$
Here $\mathfrak{t}_i$ is the lie algebra of $T_i$.
\end{theorem}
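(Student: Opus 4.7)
The plan is to prove the GKM characterization in the classical three-step way: first establish that the restriction $l\colon H^{*}_{T}(X)\to H^{*}_{T}(X^{T})$ is injective by combining equivariant formality with Borel localization, then identify the image with the pullback from the 1-skeleton via the Chang--Skjelbred lemma, and finally compute that pullback by analyzing each invariant $\mathbb{P}^{1}$ individually.

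For injectivity, I would use that equivariant formality makes $H^{*}_{T}(X)$ a free module over $R:=H^{*}_{T}(\mathrm{pt})=S(\mathfrak{t}^{*})$ of rank $\dim H^{*}(X)$. Borel's localization theorem says that after inverting the nonzero weights in $R$, the restriction $l$ becomes an isomorphism onto $H^{*}_{T}(X^{T})\otimes_{R}\mathrm{Frac}(R)$. Since a free $R$-module injects into its localization, $l$ itself is injective.

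Next I would identify the image. Let $X_{1}=X^{T}\cup\bigcup_{i}O_{i}$ be the 1-skeleton. The Chang--Skjelbred lemma, which again relies on equivariant formality, asserts that $\mathrm{im}(l)$ equals the image of the restriction $H^{*}_{T}(X_{1})\to H^{*}_{T}(X^{T})$. It remains to compute the latter. Each orbit closure $\overline{O_{i}}$ is a $T$-invariant $\mathbb{P}^{1}$ on which $T$ acts through a primitive character $\chi_{i}$ of $T/T_{i}$, with fixed points $N_{i}$ and $S_{i}$. A direct Mayer--Vietoris computation over the two affine charts of $\overline{O_{i}}$ yields
\[
H^{*}_{T}(\overline{O_{i}})=\{(f_{N_{i}},f_{S_{i}})\in R\oplus R: f_{N_{i}}-f_{S_{i}}\in \chi_{i}\cdot R\}.
\]
Since the kernel $\mathfrak{t}_{i}$ of $\chi_{i}$ is the Lie algebra of the stabilizer $T_{i}$, the divisibility condition is equivalent to $f_{N_{i}}|_{\mathfrak{t}_{i}}=f_{S_{i}}|_{\mathfrak{t}_{i}}$. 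Because the $\overline{O_{i}}$ meet only at $T$-fixed points, a further Mayer--Vietoris argument over $X_{1}$ shows that $H^{*}_{T}(X_{1})$ is precisely the collection of tuples $(f_{p_{j}})_{p_{j}\in X^{T}}$ satisfying one such relation per orbit, which is exactly the GKM subring in the statement.

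The main obstacle is Chang--Skjelbred, since injectivity of $l$ alone only places $\mathrm{im}(l)$ inside $H^{*}_{T}(X^{T})$, whereas we need to see that the higher-codimension strata impose no further relations. This is where equivariant formality is essential: in the long exact sequence of the orbit-type filtration (the Atiyah--Bredon sequence), the contribution of the codimension $\geq 2$ strata is $R$-torsion, and equivariant formality forces $H^{*}_{T}(X)$ to be torsion-free, promoting the localized equality to the integral equality required. Once this algebraic input is secured, the $\mathbb{P}^{1}$ calculation assembles the remaining content of the theorem.
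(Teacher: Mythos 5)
The paper does not prove this theorem; Theorem~5 is quoted as a recalled result from the GKM literature (the references \cite{tymoczko2005introduction} and \cite{goresky1997equivariant}), so there is no ``paper's own proof'' to compare against. Your sketch is the standard argument from those sources---Borel localization together with freeness of $H^*_T(X)$ over $S(\mathfrak{t}^*)$ coming from equivariant formality for injectivity, the Chang--Skjelbred lemma for the reduction to the one-skeleton, and a Mayer--Vietoris computation over each invariant $\mathbb{P}^1$ (using that the stabilizer condition makes the acting character primitive, so divisibility by $\chi_i$ is equivalent to agreement of restrictions on $\mathfrak{t}_i$)---and it is correct as an outline. The only place where you lean on more than you write down is Chang--Skjelbred itself; your Atiyah--Bredon justification is the right idea but needs the full strength of freeness (not merely torsion-freeness) and a localization at codimension-one primes to pin down that the higher strata impose no further conditions, which you correctly flag as the crux.
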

\subsection{Affine paving of $Gr^{\Pi}_e(M)$ when $M$ is a representation of $Q$ of type A}

\begin{defi}[\cite{affine} 2.2]
We say a space $X$ is paved by affines if $X$ has an order partition into disjoint $X_1,X_2,\cdots$ such that each finite union $\bigcup_{i=1}^j X_i$ is closed in X and each $X_i$ is an affine space. 
\end{defi}
A space with an affine paving has odd cohomology vanishing.
\begin{prop}[\cite{affine}, 2.3]
Let $X=\bigcup X_i$ be a paving by a finite number of affines with
each $X_i$ homeomorphic to $\CC^{d_i}$.  The cohomology groups
of $X$ are given by $H^{2k}(X) = \bigoplus_{\{i\in I \ |\ d_i = k\}} \mathbb{Z}$.
\end{prop}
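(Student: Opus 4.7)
The plan is to induct on the number of pieces in the paving. Set $Y_j = \bigcup_{i \leq j} X_i$, so that $\emptyset = Y_0 \subset Y_1 \subset \cdots \subset Y_n = X$ is a filtration of $X$ by closed subspaces with $Y_j \setminus Y_{j-1} = X_j \cong \CC^{d_j}$; since $Y_{j-1}$ is closed in $X$ (and hence in $Y_j$), $X_j$ is open in $Y_j$. The inductive claim is that $H^{2k+1}(Y_j) = 0$ for all $k$, and $H^{2k}(Y_j)$ is free abelian of rank $|\{i \leq j : d_i = k\}|$.

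The key local input is the relative cohomology $H^*(Y_j, Y_{j-1})$. I would identify this with $\tilde{H}^*(Y_j/Y_{j-1})$; since $X_j$ is locally compact and $Y_{j-1}$ is closed, the quotient $Y_j/Y_{j-1}$ is the one-point compactification of $\CC^{d_j}$, namely the sphere $S^{2d_j}$. Hence $H^k(Y_j, Y_{j-1}) = \ZZ$ when $k = 2d_j$ and vanishes otherwise. Plugging this into the long exact sequence of the pair,
$$\cdots \to H^{k-1}(Y_{j-1}) \to H^k(Y_j, Y_{j-1}) \to H^k(Y_j) \to H^k(Y_{j-1}) \to H^{k+1}(Y_j, Y_{j-1}) \to \cdots,$$
one sees that $H^*(Y_{j-1})$ is concentrated in even degrees by induction while $H^*(Y_j, Y_{j-1})$ is concentrated in the single even degree $2d_j$. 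All connecting homomorphisms therefore link terms of opposite parity and must vanish. The sequence splits into short exact pieces, giving $H^{2k}(Y_j) \cong H^{2k}(Y_{j-1})$ for $k \neq d_j$, and $0 \to \ZZ \to H^{2d_j}(Y_j) \to H^{2d_j}(Y_{j-1}) \to 0$ for $k = d_j$; the latter splits because the right-hand term is free by induction, completing the inductive step.

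The only delicate point is the identification $Y_j/Y_{j-1} \cong S^{2d_j}$, which requires mild point-set hypotheses (e.g.\ $X$ compact Hausdorff, which holds in the applications since quiver Grassmannians are projective); in an algebro-geometric setting one can alternatively work with Borel--Moore or compactly supported cohomology directly, where $H^*_c(\CC^{d_j}) = \ZZ$ concentrated in degree $2d_j$ is formal. Once this is in place the rest of the argument is bookkeeping in the long exact sequence, with parity providing the mechanism that simultaneously kills odd-degree cohomology and counts Betti numbers by the number of cells of each complex dimension.
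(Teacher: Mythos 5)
The paper states this proposition as a citation from the reference labelled \cite{affine} and does not reprove it, so there is no internal proof to compare against; your argument has to stand on its own, and it does. The filtration by the closed sets $Y_j = \bigcup_{i\leq j} X_i$, the identification $H^*(Y_j, Y_{j-1}) \cong \tilde{H}^*(S^{2d_j})$, and the parity argument in the long exact sequence of the pair (connecting maps vanish because they go from even to odd degree, the resulting short exact sequences split because the quotients are free) is precisely the standard proof and is surely the one given in the cited source.

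One point worth making explicit: you correctly flag the one-point-compactification step as the delicate one, and it is not merely a point-set nicety. As literally stated, with $X$ an arbitrary space paved by finitely many affines, the proposition is \emph{false} for ordinary singular cohomology. Take $X = \CC$ with the trivial one-piece paving: then $H^0(\CC) = \ZZ$ and $H^2(\CC) = 0$, whereas the formula predicts $H^0 = 0$ and $H^2 = \ZZ$. The statement is really a statement about compactly supported cohomology $H^*_c$ (equivalently Borel--Moore homology), which agrees with ordinary cohomology precisely when $X$ is compact. Your proof uses compactness implicitly at the step where $Y_j/Y_{j-1}$ is identified with the one-point compactification of $\CC^{d_j}$: without compactness of $Y_j$ that quotient is not $S^{2d_j}$. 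In the intended application this is harmless since quiver Grassmannians are projective, but it would be cleaner to either add the compactness hypothesis explicitly or run the entire induction in $H^*_c$, where $H^*_c(\CC^{d_j}) = \ZZ$ concentrated in degree $2d_j$ is immediate, the long exact sequence of the open-closed decomposition $X_j \hookrightarrow Y_j \hookleftarrow Y_{j-1}$ replaces the pair sequence, and no quotient space or ``good pair'' hypothesis is ever needed.
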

The main observation is the following lemma.
\begin{lemma}
Let $M$ be a representation of Q, where Q is of type A with all edges in E pointing to the right. Then the quiver Grassmannian $Gr^{\Pi}_e(M)$
is paved by affines for any dimension vector e.
\end{lemma}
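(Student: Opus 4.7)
The plan is to construct a $\CC^*$-action on $Gr^{\Pi}_e(M)$ with isolated fixed points so that the Bialynicki--Birula decomposition gives an affine paving in the sense of Definition 1. Since $Q$ is of type $A$ with all arrows pointing right, $M$ is a chain $M_1 \to M_2 \to \cdots \to M_n$ of linear maps, and by Gabriel's theorem it decomposes as a direct sum of indecomposable interval modules $M \cong \bigoplus_{r=1}^N I_r$, where $I_r = I[a_r,b_r]$ has $(I_r)_i = k$ for $a_r \le i \le b_r$ and $0$ otherwise, with identity maps between consecutive vertices in its support. Viewing $M$ as a $\Pi$-module with $\phi_{a^*}=0$ for $a\in E$, a $\Pi$-submodule of $M$ is the same as a $kQ$-submodule, so $Gr^{\Pi}_e(M)$ is the usual quiver Grassmannian for $Q$.

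First I would pick a basis $\{v_{r,i} : a_r \le i \le b_r\}$ of $M$ adapted to this decomposition and let the torus $T' = (\CC^*)^N$ act on $M$ by scaling each indecomposable summand independently. This action commutes with every structure map of $M$ and hence induces an action on $Gr^{\Pi}_e(M)$. Any one-parameter subgroup $\lambda: \CC^* \to T'$ with sufficiently generic weights has the same fixed locus as $T'$. Since any $T'$-fixed submodule must be a direct sum of submodules of the $I_r$, and each $I_r$ has only the $b_r - a_r + 2$ submodules $I[c, b_r]$ with $a_r \le c \le b_r + 1$, the fixed set $Gr^{\Pi}_e(M)^{T'}$ is finite.

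Next I would show that each Bialynicki--Birula attracting cell $C_p = \{N : \lim_{t\to 0} \lambda(t)\cdot N = p\}$ is an affine space. Embedding $Gr^{\Pi}_e(M) \hookrightarrow \prod_i Gr(e_i, M_i)$ and ordering each induced basis of $M_i$ by $\lambda$-weight, every subspace $N_i$ admits a unique reduced row echelon representative whose pivot positions are prescribed by $p$, so the projection of $C_p$ into the product of Schubert cells sits inside a product of affine spaces parameterised by the free entries of these echelon matrices. The subrepresentation conditions $\phi_i(N_i) \subseteq N_{i+1}$ translate into equations in these entries; because each $\phi_i$ sends every basis vector either to another basis vector or to $0$, these equations are \emph{linear}, so $C_p$ is cut out as an affine subspace.

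Finally, the standard ordering of Bialynicki--Birula cells by the $\lambda$-weight of the attracting fixed point makes every initial segment closed, which yields the affine paving demanded by Definition 1. The main obstacle, and the only step that genuinely uses the hypothesis of type $A$ with rightward arrows, is the linearity of the subrepresentation conditions after reduction to echelon form: with the chain orientation every structure map acts as a partial identity on the chosen basis, which is precisely what forces the cutting equations to be linear rather than polynomial. For more complicated orientations or non-type-$A$ quivers one has to work harder to keep the attracting cells affine.
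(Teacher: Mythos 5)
The approach via Bialynicki--Birula decomposition is conceptually natural, but the proposal contains a genuine gap at its key step: the claim that, after passing to echelon coordinates on an attracting cell, the subrepresentation conditions $\phi_i(N_i)\subseteq N_{i+1}$ ``translate into \emph{linear} equations'' because $\phi_i$ is a partial identity on the basis. This is false, and without it the argument does not give affine cells. Concretely, in an attracting cell the condition ``$\phi_i(v) \in N_{i+1}$'' reads, column by column, as $v_s = \sum_{r\in P_2} v_r\,y_{r,s}$ where $P_2$ is the pivot set of $N_{i+1}$ and $y_{r,s}$ are its free echelon entries. The coordinates $v_r$ of $\phi_i(v)$ for a pivot row $v$ of $N_i$ are themselves affine-linear in the free entries of $N_i$, so the resulting relations are \emph{bilinear}, not linear, in the coordinates on the cell.

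More importantly, one cannot brush this aside by observing that a bilinear equation can sometimes be solved for a variable: whether the attracting cell is actually an affine space depends sensitively on the choice of the generic cocharacter $\lambda$, and for some choices it is not affine at all. Take $Q = (1\to 2)$, $M_1 = \CC^3$ with basis $e_1,e_2,e_3$, $M_2 = \CC^3$ with basis $f_1,f_2,f_3$, $\phi(e_1)=f_1$, $\phi(e_2)=f_2$, $\phi(e_3)=0$, and $e=(1,2)$. Here $M$ decomposes into four interval summands $(e_1,f_1)$, $(e_2,f_2)$, $(e_3)$, $(f_3)$. Giving them $T'$-weights $(3,2,1,4)$ respectively, the attracting cell at the fixed point $\bigl(\operatorname{span}(e_3),\operatorname{span}(f_1,f_2)\bigr)$ is parametrized by $N_1 = \operatorname{span}(e_3 + x_1 e_1 + x_2 e_2)$ and $N_2 = \operatorname{span}(f_1 + y_1 f_3,\, f_2 + y_2 f_3)$, and the subrepresentation condition is exactly
\[
x_1 y_1 + x_2 y_2 = 0,
\]
a three-dimensional quadric cone in $\CC^4$, which is singular and certainly not $\CC^3$. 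So with this (perfectly generic) $\lambda$ the BB stratification is \emph{not} an affine paving, even though the underlying quiver Grassmannian is (as the lemma asserts). Choosing instead weights $(3,2,4,1)$ happens to make every cell affine in this example, which shows that some weight orderings work, but your argument never isolates or justifies a good choice, nor does it explain why one should always exist in general. Relatedly, you cannot appeal to the smooth BB theorem to conclude the cells are affine, because $Gr^{\Pi}_e(M)$ is typically singular; the whole burden of the proof is to show affineness of the strata directly, which is precisely where the argument breaks.

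The paper handles this by working with the nilpotent operator $\phi = \oplus_a \phi_a$ on $V = \oplus_i M_i$ and invoking Shimomura's affine paving of the $\phi$-stable Grassmannian $Gr_d(V)^\phi$ by pieces $C^\phi_\alpha$ indexed by semistandard tableaux; it then checks (using the $I$-grading and the orientation hypothesis) that each intersection $C^\phi_\alpha \cap Gr^\Pi_e(M)$ is an affine subspace, and concludes with the sublemma on restricting pavings to subspaces. That route already has baked in the ``good'' combinatorial choice that your BB approach would need to recover. To salvage your argument you would need to (i) identify a specific one-parameter subgroup (or an order on the interval summands) adapted to $\phi$, and (ii) prove that for that choice the defining relations of each attracting cell are always eliminable, e.g.\ that they form a triangular system in an ordering of the free echelon variables. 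As written, both steps are missing, and step (ii) is the real content of the lemma.
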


We need a sublemma first.

\begin{sublemma}

Suppose X is paved by $X_i$'s. Let $Y\subset X $ be a subspace. if for each i, $Y_i=X_i \bigcap Y$ is $\emptyset$ or affine then $Y=\bigcup Y_i$ is an affine paving.\\
\end{sublemma}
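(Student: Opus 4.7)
The plan is to verify the two defining properties of an affine paving directly for $Y$ with the induced decomposition $Y = \bigsqcup_i Y_i$, discarding the indices $i$ for which $Y_i = \emptyset$. Since $X = \bigsqcup_i X_i$ as a set (affine pavings are disjoint partitions), intersecting with $Y$ gives $Y = \bigsqcup_i Y_i$, so the pieces do cover $Y$ disjointly. Each nonempty $Y_i$ is affine by the hypothesis of the sublemma, so the only nontrivial thing left to check is the closedness condition: that each finite union $\bigcup_{i=1}^{j} Y_i$ is closed in $Y$.

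For this closedness step I would simply compute
\[
\bigcup_{i=1}^{j} Y_i \;=\; \bigcup_{i=1}^{j} (X_i \cap Y) \;=\; \Bigl(\bigcup_{i=1}^{j} X_i\Bigr) \cap Y.
\]
By the hypothesis that $X_1, X_2, \ldots$ is an affine paving of $X$, the set $\bigcup_{i=1}^{j} X_i$ is closed in $X$. Since $Y$ carries the subspace topology, the intersection of any closed subset of $X$ with $Y$ is closed in $Y$. Thus $\bigcup_{i=1}^{j} Y_i$ is closed in $Y$, which is exactly what is needed.

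The only bookkeeping subtlety is that the indexing in the definition of ``paving by affines'' requires an order on the pieces; I would simply inherit the order from the paving of $X$ and drop the indices where $Y_i$ is empty, noting that the resulting subsequence still has the property that each of its partial unions is closed in $Y$ (being a partial union of the original $Y_i$'s, equivalently the intersection with $Y$ of a closed subset of $X$). There is no genuine obstacle here; the sublemma is a routine consequence of the definition and the subspace topology, and its role in the paper is to enable an inductive construction of the affine paving of $Gr^{\Pi}_e(M)$ by intersecting a known paving of $\prod_{i\in I} Gr_{e_i}(M_i)$ (for instance the Schubert or attracting-cell paving) with the closed subvariety $Gr^{\Pi}_e(M)$.
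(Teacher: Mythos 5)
Your proof is correct and matches the paper's argument exactly: both establish closedness of $\bigcup_{i\leq j} Y_i$ via the identity $\bigcup_{i\leq j}(X_i\cap Y) = \bigl(\bigcup_{i\leq j}X_i\bigr)\cap Y$ together with the subspace topology. The extra bookkeeping you spell out (disjointness, dropping empty pieces) is implicit in the paper's one-line proof but does not constitute a different approach.
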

\begin{proof}
 $\bigcup_{i\leq j} Y_i=\bigcup_{i\leq j} (X_i \bigcap Y)=(\bigcup_{i\leq j} X_i) \bigcap Y$ is closed in $Y$ since $\bigcup_{i\leq j} X_i$ is closed in $X$.
\end{proof}
\begin{proof}[Proof of lemma 3]

Let $V=\oplus M_i$ be the underlying vector space and $\phi=\oplus_{a\in H} \phi_a$ be the nilpotent operator on $V$.
We adopt the notations in \cite{shimomura}.
Let $n=dimV$ and $d=\sum e_i$.
Let $C^{\phi}_\alpha =\{(v_1,...v_d)\in C_{\alpha}, \phi v_i=v_j $ if $\alpha$ contains 
$\ytableaushort[\alpha_]{ij}$
$,(1\leq i <j<d)\}$.
We know that $Gr_{d}(k^n)^{\phi}=\bigsqcup C_{\alpha}^{\phi}$.
We want to show $S^{\phi}_\alpha \bigcap Gr^{\Pi}_{e}(M)$ is affine.
Take $x\in S^{\phi}_\alpha \bigcap Gr^{\Pi}_{e}(M)$, from 1.10 in \cite{shimomura}, $x=v_1\wedge...\wedge v_d$ where 
$x= \wedge(\phi^{h}w_i : i's $ are the initial numbers of $\alpha$ and $\phi^{h}w_i\ne 0).$\\
We now show that
for $x=v_1\wedge\cdots \wedge v_d \in Gr^{\Pi}_{e}(M)$, where $v_i=e_i+ \sum_{j\neq i} x_i(j)e_j $, if $e_i\in M_t$, we have $v_i \in M_t$. Conversely, if for each i, there exists t such that $v_i \in M_t$, $x\in Gr^{\Pi}_{e}(M)$.Denote this t determined uniquely by i as t(i).\\
Since $span(v_1,...,v_d)$ is a direct sum of some $N_i\subset M_i$, we have $Pr_t(v_i) \in span(v_1,...,v_d)$,
where $Pr_t$ is the projection from V to $M_t$, 
and so $Pr_t(v_i)=\sum a_{p}v_{p}$. Comparing the coefficient of $e_p$, by the definition of $C_{\alpha}$, we have $a_p=0$ for $p\neq i$.
So we have $Pr_t(v_i)=v_i$, which implies $v_i\in M_t$. \\
Note that $\{v_1,\cdots,v_d\}$ is determined by $w_i$ where i is an initial number(and vice versa).\\
We have $w_i\in M_{t(i)}$.
Denote $l(i)$ be the number on the left of i in the d-tableaus. If i is the leftmost, set $l(i)$ to be $\emptyset$, and set $e_{\emptyset}=0$. Write $w_i= e_i +\sum x_{ij}e_j$, where $e_j\in M_{t_i}$,
we have $\phi^{r}(w_i)= e_{l^{r}(i)} +\sum x_{ij}e_{l^{r}(j)}$. Since 
M is kQ-module, we have $l^{r}(i)=l^{r}(j)$ hence $v_{i_{r}}\in M_{t(i_{r})}$ and $x\in Gr^{\Pi}_e(M)$. So we have 
$S^{\phi}_\alpha \bigcap Gr^{\Pi}_{e}(M)$ is affine.
Apply lemma 3, we are done.

\end{proof}
\subsection{Proof of lemma 1}
\begin{proof}
For $M$ given by $\CC^{d_{1}}\xrightarrow[]{\phi} \CC^{d_{2}}$ and a choice of $e=(e_1, e_2)$,
denote $X=Gr^{\Pi}_e(M)$.
First, we define a torus action on $X$.
Let $I=ker\phi $. Choose a basis $e_1,e_2,\cdots ,e_s $ of $I$ and extend it to a basis $e_1,\cdots ,e_s,e_{s+1},\cdots,e_t$ of $M_1$. Let $J$ be span$\{e_{s+1},\cdots,e_t)\}$ so the image of $J$ is span $\{f_{s+1},\cdots,f_t\}$. We extend the basis $\{f_i=\phi(e_i)\}$ of the image of $J$ to a basis ($f_{s+1},\cdots,f_t,f_{t+1},\cdots,f_r)$ of $M_2$.  Let K=span$\{f_{t+1},...f_r\}$.
we have $M_1=I\oplus J$ and $M_2=\phi(J)\oplus K$.\\

Let $\cI=\{1,\cdots,s\}$, $\cJ=\{s+1,\cdots,t\}$ and $ \cL=\{t+1,\cdots,r\}$.
Let tori $T_{I}=G_m^{\cI},T_{J}=G_m^{\cJ} ,T_{L}=G_m^{\cL}$
act on $I,J\cong \phi(J)$, $K$
by multiplication compotentwisely (For instance, $T_{I}$ acts on I by $(t_1,\cdots,t_s)\sum a_{i}e_i =\sum a_{i}t_{i}e_i$ and on $J,K$ trivially). Hence they act on $M_1=I\oplus J$ and $M_2=\phi(J) \oplus K$.
This induces an action of $T=T_{I}\times T_{J}\times T_{K}$ on $Gr^{\Pi}_e(M)$.
By lemma 3, $Gr^{\Pi}_e(M)$ is paved by affines so by proposition 3 it has odd cohomology vanishing therefore the spectral sequence associated to the fibration
$X \times_T ET \longrightarrow BT$ converges at $E^2$ and X is equivariantly formal.

Denote by $f$ the forgetful map $H^*_{T}(X)\xrightarrow[]{f} H^*(X)$. From $\mathsection 4.4 $ we have $
ker(f)=\sum_1^{dimT} t_s H^*_{T}(X)$.
Since $c^i(S_2\oplus Q_1)=f(c^{i}_{T}(S_2\oplus Q_1))$,
it suffices to prove $c^{i}_{T}(S_2\oplus Q_1))\in ker(f)$
when $i>e_2-e_1+dimI$.

To use GKM theorem, we need to know the one dimensional orbits and T-fixed points of X.

First, we see what $X^T$ is.
For a point $p=(V_1, V_2)$ in $X$, in order to be fixed by $T$, $V_1$ and $V_2$ need to be spanned by some of  basis vectors $e_i$ and $f_i$.
For a subset $S$ of $\cI$ (resp. $\cJ$), we denote by $e_{S}$ (resp. $f_{S}$) the span $\{e_i|i\in S\}$ (resp. span$\{f_i|i\in S\}$). The T-fixed points in $X$ consist of all $V=(V_1, V_2)$, such that $V_1=e_{A\bigcup B}, V_2=f_{C\bigcup D}$, for some
$A\subset I,B\subset C\subset J$ and $D\subset K$.

For any point $p=(V_1, V_2)$ in $X^T$, let $V_1=e_{A\bigcup B}, V_2=f_{C\bigcup D}$.
Over $p$, $Q_1=(I\oplus J)/ e_{A\bigcup B}$  is isomorphic to $e_{(\cI \setminus A)\oplus (\cJ  \setminus B) }$
(The restriction of a $T$-equivariant bundle to a T-fixed point is just a $T$-module).
So over $X^T$, we can decompose $S_2\oplus Q_1$ as follows:
$$S_2\oplus Q_1 \cong e_{(\cI \setminus A)\oplus (\cJ  \setminus B)}\oplus f_{(C\bigcup D)}=(e_{(\cI \setminus A)\oplus (C  \setminus B)}\oplus f_{D}) \oplus (e_{\cJ \setminus C}\oplus f_{C}).$$
Denote the bundle over $X^{T}$ whose fiber over each point p is $e_{(\cI \setminus A)\oplus (C  \setminus B)}\oplus f_{D})$ by $E_1$ and the bundle over $X^{T}$ whose fiber over p is $e_{(\cJ \setminus C)}\oplus f_{C}$ by $E_2$.

We now use localization. Denote by $l$ the map 
$H^{*}_{T}(X)\xrightarrow[]{l}H^{*}_{T}(X^T)=\oplus_{p\in X^T} H^*(p)$. From GKM theory $l$ is injective, so the condition $c^{i}_{T}(S_2\oplus Q_1))\in ker(f)$ is equivalent to 
$l(c^{i}_{T}(S_2\oplus Q_1))\in l(ker(f))$.
We have
\begin{align}
l(ker(f))=l(\sum_{s=1}^{dimT} t_s H^*_{T}(X))=\sum_{s=1}^{dimT} \underbrace{(t_s,\cdots, t_s)}_\text{the number of T-fixed points in X} l(H^*_{T}(X)).
\end{align}
By functorality of Chern class, 
$l(c^{i}_{T}(S_2\oplus Q_1))=c^{i}_{T}(S_2|_{X^T}\oplus Q_1|_{X^T}))$.\\
We compute the $T$-equivariant Chern class over $X^{T}$. For\footnote{We always denote S and Q but indicate over which space we are considering.} each p,
$$c_{T}^p (S_2\oplus Q_1)=c_{T}^p (E_2\oplus E_1)=\sum_{i} c_{T}^{p-i} (E_1) c_{T}^i (E_2)=\sum_{i\geq 1}c_{T}^{p-i} (E_1) c_{T}^i (E_2) .$$ The last equality holds since $c_{T}^{p} (E_2)=0$ when $p> dimE_2=dimI+e_2-e_1$.
Now to show $c_{T}^p (S_2\oplus Q_1)\in l(ker(f))$, It suffices to show that $c_{T}^{p-i} (E_1) c_{T}^i (E_2)\in l(ker(f))$, for any $i$.
The action of T on $E_2$ is actually the same on each T-fixed point. And at each point, $c_{T}^i (E_2)$ is the $i^{th}$ elementary symmetric polynomial of $t_s, 1\leq s\leq dimT$. So by (5), it suffices to show that $c_{T}^{i} (E_1)\in l(H_{T}^{*}(X))$.

Now we will see what 1-dimensional orbits are. Take an orbit 
$O_i$, in order to be 1 dimensional its closure must contain two fixed points. Let $\overline{O_i}=O_i\bigcup \{N_i\}\bigcup \{S_i\}$, where
$N_i=(e_{A\bigcup B}, f_{C\bigcup D})$ and $S_i=(e_{A^{\prime}\bigcup B^{\prime}}, f_{C^{\prime}\bigcup D^{\prime}})$ are the fixed points. 
$O_i$ is one dimensional whenever either $A\bigcup B$ and $A^{\prime}\bigcup B^{\prime}$ differ by one element with $C\bigcup D=C^{\prime}\bigcup D^{\prime}$ or $C\bigcup D$ and $C^{\prime}\bigcup D^{\prime}$ differ by one element with $A\bigcup B=A^{\prime}\bigcup B^{\prime}$ . In the first case, we have some $s\in A\bigcup B$ and $s^{\prime} \in A^{\prime}\bigcup B^{\prime}$, such that $A\bigcup B \setminus s =A^{\prime}\bigcup B^{\prime} \setminus s^{\prime}$.

Notice that the annihilator for the lie algebra $\mathfrak{t}_i$ in $S(t^*)$ is generated by $t_s-t_{s^\prime}$, so by theorem 5, the condition along $O_i$ for an element $h\in H^*_{T}(X^T)$ to be in $im(l)$ is $$(t_s-t_{s^{\prime}})\ |\  (h_{N_i}-h_{S_i}).$$
But we have 
$$c_{T}(E_1)|_{N_i}-c_{T}(E_1)|_{S_i}=\\
(1+t_{s{\prime}})\prod_{i\in \cI\bigcup C \setminus (A\bigcup B)\setminus \{s^{\prime}\}} (1+t_i)-
(1+t_s)\prod_{i\in \cI\bigcup C \setminus (A^{\prime}\bigcup B^{\prime})\setminus \{s\}} (1+t_i).$$
Note that $\cI\bigcup C \setminus (A\bigcup B)\setminus \{s^{\prime}\} = \cI \bigcup C \setminus (A^{\prime}\bigcup B^{\prime})\setminus \{s\}$, 
so $t_s-t_{s^{\prime}} $ divides $c_{T}(E_1)|_{N_i}-c_{T}(E_1)|_{S_i}$.
We conclude that $c_{T}^{i} (E_1)\in l(H_{T}^{*}(X))$.\\
The other case is similar.

\end{proof}
\section{Proof of isomorphism when $M$ is a representation of $Q$ of type A}

We first prove that $\Psi$ is surjective.

\begin{lemma}
(a) Denote $Y=\prod Gr_{e_i} (k^{d_i})$ and $X=Gr^{\Pi}_{e}(M)$. Then $Y\setminus X $ is paved by affines.\\
(b) $\Psi$ is surjective.
\end{lemma}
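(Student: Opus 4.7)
The plan is to establish (a) first and to derive (b) from (a) via a standard cohomology long exact sequence.

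For (a): the natural strategy is to produce an affine paving of $Y = \prod_i Gr_{e_i}(M_i)$ in which the closed subvariety $X = Gr^\Pi_e(M)$ is a union of cells; then every cell is either contained in $X$ or in $Y\setminus X$, and the sublemma of $\S 4.5$ directly yields an affine paving of $Y\setminus X$. The natural candidates are products of Schubert cells $\prod_i C_{\alpha_i}$ in each factor with respect to flags in each $M_i$ adapted to the quiver structure. Since $M$ is a $kQ$-module for $Q$ of type $A$ with all arrows going to the right, I would decompose $M = \bigoplus_k M(i_k, j_k)$ into interval indecomposables and pick the canonical basis $\{v^k_s : s \in [i_k, j_k]\}$ with $v^k_s \in M_s$ and $\phi(v^k_s) = v^k_{s+1}$ (or $0$ if $s = j_k$). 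I then use the induced basis of each $M_s$, with an ordering tailored to the intervals, to define a flag and hence the Schubert cells of $Gr_{e_s}(M_s)$.

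On each product cell a general point $(V_1, \dots, V_n)$ has explicit echelon coordinates, and the defining condition $\phi(V_s) \subset V_{s+1}$ of $X$ becomes polynomial equations in those coordinates. The main obstacle is a combinatorial verification that, for the chosen ordering of the basis, these equations are either identically satisfied or identically violated on each Schubert cell. This should be tractable because $\phi$ permutes basis vectors (possibly sending them to $0$), so the relation between the echelon data of $V_s$ and that of $V_{s+1}$ is rigid; modelling the argument on the proof of Lemma 3, where the analogous compatibility was checked for Shimomura cells inside the Springer fiber, should carry it through. Once this is done, the sublemma gives the desired affine paving of $Y\setminus X$.

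For (b): given (a), the three spaces $Y$, $X$, and $Y\setminus X$ all admit affine pavings, so by Proposition 3 their (compactly supported) cohomologies are concentrated in even degrees. The long exact sequence in compactly supported cohomology for the open-closed decomposition $Y\setminus X \hookrightarrow Y \supset X$,
$$\cdots \to H^k_c(Y\setminus X) \to H^k(Y) \to H^k(X) \to H^{k+1}_c(Y\setminus X) \to \cdots,$$
then forces $H^k(Y) \to H^k(X)$ to be surjective in every degree, since the flanking compactly supported cohomology groups vanish in odd degree and both $H^*(Y)$ and $H^*(X)$ vanish in odd degree. The ring $H^*(Y) = \bigotimes_i H^*(Gr_{e_i}(M_i))$ is generated by Chern classes of the tautological sub- and quotient bundles on each factor, so $H^*(X)$ is generated by their restrictions $c_j(S_i)|_X$ and $c_j(Q_i)|_X$, which are exactly $\Psi(a_{ij})$ and $\Psi(b_{ij})$. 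Hence $\Psi$ is surjective.
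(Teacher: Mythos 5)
Your argument for (b) is essentially the same as the paper's (the paper cites an external result for it; your long-exact-sequence-in-$H^*_c$ argument is the standard proof of that cited fact), so that part is fine.

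Your approach to (a), however, has a genuine gap. You propose to find a \emph{single} affine paving of $Y=\prod_i Gr_{e_i}(M_i)$ by product Schubert cells, adapted to a Jordan/interval basis, such that every product cell is either entirely contained in $X$ or entirely contained in $Y\setminus X$; you flag the verification of this dichotomy as ``the main obstacle'' but expect it to be tractable. In fact no choice of flags can make it work, even in the simplest nontrivial example. Take $Q:1\to 2$, $M_1=M_2=\CC^2$, $\phi=\mathrm{id}$, $e=(1,1)$. Then $Y=\mathbb{P}^1\times\mathbb{P}^1$ and $X$ is the diagonal $\mathbb{P}^1$. Any flag gives the cell structure $\{pt\}\sqcup\mathbb{A}^1\sqcup\mathbb{A}^1\sqcup\mathbb{A}^2$, and the big cell $\mathbb{A}^2$ always meets the diagonal in a one-dimensional locus, so it is neither contained in $X$ nor in $Y\setminus X$. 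Worse, $\mathbb{A}^2$ minus a line $\cong \mathbb{A}^1\times\mathbb{G}_m$ is not an affine space, so even invoking the sublemma of \S 4.5 on the complement fails. The analogy you draw with Lemma~3 doesn't save this: that lemma shows that Shimomura cells of the Springer fiber meet $Gr^\Pi_e(M)$ in affine \emph{subspaces}, not that they are entirely inside or outside $Gr^\Pi_e(M)$; in particular the \emph{complement} inside a cell is generally not affine. The paper circumvents this obstruction with an iterative construction instead of a single paving: one interpolates between $X$ and $Y$ by a chain of quiver Grassmannians $X=X^{(0)}\subset X^{(1)}\subset\cdots\subset X^{(N)}=Y$, each obtained from the previous by moving one box of the Young diagram of the nilpotent operator $\phi$ (shrinking a Jordan block and adding a singleton block), and one shows at each step that $X^{(k+1)}\setminus X^{(k)}$ is a union of affine Shimomura-type cells; the terminal step has $\phi'=0$, whence $X^{(N)}=\prod_i Gr_{e_i}(M_i)=Y$. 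The union of these cell differences, ordered by the filtration, then paves $Y\setminus X$. You would need to replace your single-paving strategy with such an interpolation (or some other device) for the argument to go through.
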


\begin{proof}

(a).
Let a be the number where the Young diagram of $\phi_{Y}$ has $a^{th}$ row as the first row from the bottom that does not have one block. For example, in the left diagram, a=4.
$$\ydiagram{4,3,2,2,1,1,1}\xrightarrow[]{} \ydiagram{4,3,2,1,1,1,1,1}$$
Define $\phi^{\prime}$ be the operator of $V$ that corresponds to the diagram by moving the left most block A of the $a^{th}$ row to the bottom in the diagram of $\phi_{Y}$.
Let $M^{\prime}$ be the corresponding module and $X^{\prime}$ be $Gr_e{M^{\prime}}$.\\
We claim that $X^{\prime}\setminus X$ is paved by affines.\\
By lemma 4, we have $X=\bigsqcup_{\alpha \in I } C_{\alpha}$, where $I^{\prime}$ is the set of all semi-standard young tableau in $\lambda$. Also we have $Y^{\prime}=\bigsqcup_{\alpha \in I^{\prime} } C^{\prime}_{\alpha}$, where $I$ is the set of all semi-standard young tableau in $\lambda^{\prime}$.
If $\alpha$ contains block A, $\alpha$ is s.s in $\lambda$ implies $\alpha^{\prime}$ is s.s in $\lambda^{\prime}$.
If $\alpha$ does not contain block A , $\alpha$ also does not contain any block in that row, so $\alpha$ is still s.s in $\lambda^{\prime}$. So $I\subset I^{\prime}$.
\\
For $\alpha$ that contains block A, there are two types.
Let E be the set of $\alpha$ that contains block A and some other block in the row of A.
Let F be the set of $\alpha$ that contains block A but no other block in the row of  A.
Let G be the set of $\alpha $ that does not contain block A.
So we have $I=E\bigsqcup F\bigsqcup G=F\bigsqcup (E\bigsqcup G)$.\\
Take $\alpha \in F$, in $\lambda$,
the block A in $\alpha$ is not initial so 
the vector indexed by A is determined by the initial vector.
In $\lambda^{\prime}$, A is the last block so the vector indexed by A is the basis vector indexed by block A. In both case the vector indexed by A has been determined, so $C_{\alpha}=C_{\alpha}^{\prime}$ when $\alpha \in F.$

For $\alpha \in E\bigcup G$, let $s(\alpha) $ be the tableau of the same relative position in $\lambda^{\prime}$ as $\alpha$ in $\lambda$. Then $C_{\alpha}=C^{\prime}_{s(\alpha)}$. Since s is a bijection between $E\bigsqcup G$ and $E^{\prime}\bigsqcup G^{\prime}$ we have $\bigsqcup_{\alpha \in E\bigsqcup G} C_{\alpha}=\bigsqcup_{\alpha \in E^{\prime}\bigsqcup G^{\prime}} C^{\prime}_{\alpha}$ .\\
Then we have $X^{\prime}\setminus X=\bigsqcup_{\alpha \in I^{\prime}\setminus I} C^{\prime}_{\alpha}$ is paved by affines.
We can do this procedure step by step until $X^{\prime}$ becomes Y, so we are done.

(b). By lemma 1, X is paved. With part(a) , we have the homology map from X to Y is injective hence $\Psi$ is surjective (see 2.2 in\cite{affine}).
\end{proof}

We want to prove the two sides of $\Psi$ have the same dimension as k-vector spaces and actually we will prove it for a more general setting.
\begin{defi}
For a $\Pi$-mod $M$
Let $V=\oplus M_i$ be the underlying vector space and $\phi=\oplus_{a\in H} \phi_a$ be the nilpotent operator on $V$.
We say $M$ is $I$-compatible if there is a Jordan basis $\{v_{ij}\}$ of V such that each $v_{ij}$ is contained in some $M_r$.

\end{defi}
\begin{defi}
For a $\Pi$-module $M$, if the Young diagram of the associated operator $\phi$ has one raw, we call $M$ one direction module.
\end{defi}
\begin{prop}
If $M$ is I-compatible, it is a direct sum of one direction module with multiplicities.
\end{prop}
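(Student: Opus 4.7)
The plan is to use the given $I$-compatible Jordan basis directly to produce a $\Pi$-module decomposition, where each summand is the span of a single Jordan block.

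First I would fix an $I$-compatible Jordan basis $\{v_{ij}\}$, with $i$ indexing Jordan blocks of $\phi$ and $j$ indexing position inside a block, so that $\phi(v_{ij}) = v_{i(j+1)}$ (and the top of the block maps to $0$). For each block $i$, let $N_i = \mathrm{span}\{v_{i0}, v_{i1}, \ldots, v_{il_i}\}$. By construction $N_i$ is a cyclic $k[\phi]$-submodule of $V$ on which $\phi$ acts as a single Jordan block, so if I can upgrade $N_i$ to a $\Pi$-submodule then it will automatically be a one-direction module in the sense of the preceding definition.

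The key point is that $I$-compatibility makes each $N_i$ an $I$-graded subspace: each basis vector $v_{ij}$ lies in some single $M_{r_{ij}}$, so $N_i = \bigoplus_r (N_i \cap M_r)$. Now I would argue that any $\phi$-stable, $I$-graded subspace is automatically stable under each $\phi_a$. Indeed, for $v \in N_i \cap M_r$ one has $\phi v = \sum_{a : s(a)=r} \phi_a v$, and because $Q$ is a Dynkin quiver the targets $t(a)$ of the edges out of $r$ are pairwise distinct; thus the summands $\phi_a v$ sit in distinct graded pieces $M_{t(a)}$. Since $\phi v \in N_i$ and $N_i$ respects the grading, each component $\phi_a v$ must lie in $N_i \cap M_{t(a)}$. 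Hence $N_i$ is a $\Pi$-submodule.

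Finally, since $V = \bigoplus_i N_i$ as vector spaces and each $N_i$ is a graded $\Pi$-submodule, this is a decomposition of $M$ as a $\Pi$-module. Each $N_i$ has the $\phi$-action a single Jordan block, so it is a one-direction module. Collecting isomorphic $N_i$'s yields the statement with multiplicities. The only place where one needs to be careful is the passage from $\phi$-stability to $\phi_a$-stability, which is the only step using the structure of the quiver (the simplicity of the Dynkin graph), and this is the main content of the argument; there is no serious obstacle once the $I$-grading of each Jordan block is observed.
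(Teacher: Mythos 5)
Your proof is correct, and it usefully fills in a step the paper leaves unproved: the proposition appears in the paper with no argument attached, so there is no ``paper's proof'' to compare against. Your argument is the natural one, and the key point is exactly where you locate it: once each Jordan block span $N_i$ is observed to be $I$-graded (from the hypothesis that each $v_{ij}$ lies in a single $M_r$), and $\phi$-stable (by construction), the passage to $\phi_a$-stability for each individual arrow $a\in H$ rests on the fact that the arrows in $\overline Q$ emanating from a fixed vertex $r$ have pairwise distinct targets, which holds because the underlying Dynkin graph is simple and loop-free. That forces the $M_{t(a)}$-components of $\phi v=\sum_{s(a)=r}\phi_a v$ to be separated by the $I$-grading, so each lands back in $N_i$. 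Since the preprojective relations hold on $M$ they hold on any submodule, so each $N_i$ is a $\Pi$-submodule with a single Jordan block, i.e.\ a one direction module, and $M=\bigoplus_i N_i$ gives the claimed decomposition. One small stylistic note: you might remark explicitly that the $N_i$ need not all be nonisomorphic, which is what ``with multiplicities'' refers to; your last sentence already handles this implicitly.
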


\begin{lemma}
If M is I-compatible, the dimension of two sides of $\Psi$ have the inequality:
dim $k[a_{ij},b_{ik}]/I(M,e))\leq \chi (Gr^{\Pi}_e(M))$, where $\chi$ is the Euler character. 
We denote the ring on the left by $R(M,e)$.
\end{lemma}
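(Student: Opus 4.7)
The plan is to bound $\dim R(M,e)$ from above by the number of $T$-fixed points of $Gr^\Pi_e(M)$ for a suitable torus $T$, and then identify this count with $\chi(Gr^\Pi_e(M))$ via an affine paving. First I would exploit the I-compatibility of $M$: by Proposition 4, write $M = \bigoplus_\ell M^\ell$ as a direct sum of one-direction modules, producing a Jordan basis $\{v_{ij}\}$ of $V$ with each $v_{ij}$ lying in a unique $M_r$. Following the strategy used in the proof of Lemma 1, let $T$ be the torus that scales each $v_{ij}$ independently. Then $T$ acts on every $Gr_{e_r}(M_r)$ and hence on $Gr^\Pi_e(M)$, and a $T$-fixed point is a $\Pi$-submodule $N \subseteq M$ spanned by a subset of the $v_{ij}$; since each $\phi_a$ sends basis vectors to basis vectors or to zero, this is a purely combinatorial datum.

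Next I would extend the affine-paving argument of Lemma 3 from the type $A$ setting there to the broader I-compatible setting. The Shimomura-style Schubert decomposition used in Lemma 3 depends only on having a Jordan basis compatible with the decomposition $V = \bigoplus M_r$, which is precisely I-compatibility. Granting this extension, $Gr^\Pi_e(M)$ is paved by affines with cells indexed by $T$-fixed points, and by Proposition 3 the cohomology is concentrated in even degrees with $\chi(Gr^\Pi_e(M)) = |Gr^\Pi_e(M)^T|$.

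On the ring side I would construct a spanning monomial set for $R(M,e)$ and inject it into $Gr^\Pi_e(M)^T$. After eliminating the $b_{ik}$ using $a_i b_i = 1$, the ring $R(M,e)$ becomes a quotient of $k[a_{ij}]$ by the system of degree relations indexed by chamber weights $\gamma$, with bounds $D_{-\gamma}(M)$. By Lemma 2, $D_{-\gamma}(M) = \dim \ker \phi_\gamma$, which is combinatorial in the Jordan basis. I would introduce a monomial order on $k[a_{ij}]$ and read off a standard-monomial spanning set, then construct an injection from standard monomials to $T$-fixed $\Pi$-submodules. The main obstacle will be this last matching: the chamber-weight relations couple generators at different vertices simultaneously, so I expect to need an induction on the number of one-direction summands, using that a $T$-fixed submodule of $\bigoplus M^\ell$ decomposes as $\bigoplus N^\ell$ with each $N^\ell \subseteq M^\ell$ a suffix of the Jordan chain. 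The analogous decomposition on the ring side must be extracted from the degree relations, with Lemma 2 supplying the bookkeeping that relates $D_{-\gamma}$ of a direct sum to a sum over the summands.
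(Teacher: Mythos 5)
Your approach is genuinely different from the paper's, but the proposal has a gap at exactly the point where the work needs to happen. The paper proves the inequality by an algebraic induction that removes one block at a time from the Young diagram of $\phi$: it forms $M'$ (one block of a suitable row removed) and $M''$ (that whole row removed), proves in Lemma~7 that $R(M,e)/\langle b_{1(d_1-e_1)}\rangle \cong R(M',e)$, and in Lemma~8 that $b_{1(d_1-e_1)}R(M,e)$ carries a module structure over $R(M'',e-\sum_i\alpha_i)$, hence $\dim R(M,e) \leq \dim R(M',e) + \dim R(M'',e-\sum_i\alpha_i)$. The Caldero--Chapoton lemma (Lemma~6) gives the same recursion for $\chi(Gr^\Pi_e(\cdot))$, and induction finishes. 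No torus action or fixed-point count on $Gr^\Pi_e(M)$ enters this part of the argument at all.

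Your plan instead is to match a standard-monomial basis of $R(M,e)$ to $T$-fixed submodules. But you never construct the injection; you say only that you ``expect to need an induction on the number of one-direction summands'' and that ``the analogous decomposition on the ring side must be extracted from the degree relations.'' That extraction is precisely the difficulty, and it is exactly what the paper's Lemmas~7 and~8 do. The degree relations are not monomial (they are coefficient-vanishing conditions on products of power series), so the existence of a well-behaved monomial order and a tractable leading-term ideal is not automatic; you have proposed a program, not an argument. A secondary issue: you take for granted that the affine-paving argument of Lemma~3 extends to all $I$-compatible $\Pi$-modules, but the paper proves it only for $kQ$-modules of type $A$, and its proof explicitly uses that hypothesis (the step establishing $l^r(i)=l^r(j)$). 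For a general $I$-compatible module the Jordan chains of different summands may run in opposite directions along the quiver, and you would need to verify that the Shimomura-style cell decomposition still restricts to an affine paving of $Gr^\Pi_e(M)$. Until both the paving extension and the monomial-to-fixed-point injection are actually carried out, the inequality is not established.
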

First we state a lemma due to Caldero and Chapoton.
\begin{lemma}[see prop 3.6 in \cite{caldero2004cluster}]
For $\Pi$-module $M,N$, we have
$$\chi(Gr_{g}(M\oplus N)=\sum_{d+e=g} \chi(Gr_{d}(M))\chi(Gr_{e}(N)).$$
\end{lemma}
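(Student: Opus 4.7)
The plan is to establish the multiplicativity by producing a $\mathbb{C}^{\times}$-action on $Gr^{\Pi}_{g}(M\oplus N)$ whose fixed locus is exactly $\bigsqcup_{d+e=g}Gr^{\Pi}_{d}(M)\times Gr^{\Pi}_{e}(N)$, and then applying the standard fact that passage to the fixed locus of a torus action on a complex algebraic variety preserves the topological Euler characteristic. This is the original argument of Caldero--Chapoton.

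First I would set up the action. Let $\mathbb{C}^{\times}$ act on the $I$-graded vector space $M\oplus N$ by $t\cdot(m,n)=(m,t\,n)$. Because each preprojective structure map $\phi_{a}$ on $M\oplus N$ is block-diagonal with respect to the decomposition, this scalar action commutes with all the $\phi_{a}$, so it preserves the set of $\Pi$-submodules and descends to an action on $Gr^{\Pi}_{g}(M\oplus N)$.

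Next I would identify the fixed locus. The only $\mathbb{C}^{\times}$-weights appearing on $M\oplus N$ are $0$ (on $M$) and $1$ (on $N$), so any $\mathbb{C}^{\times}$-invariant subspace $U\subset M\oplus N$ is the direct sum of its two weight components, namely $U=U_{M}\oplus U_{N}$ with $U_{M}=U\cap M$ and $U_{N}=U\cap N$. Because the $\phi_{a}$ preserve the direct-sum grading, $U$ is a $\Pi$-submodule if and only if $U_{M}$ and $U_{N}$ are $\Pi$-submodules of $M$ and $N$ respectively. If $U$ has total dimension vector $g$, then $U_{M}$ and $U_{N}$ have dimension vectors $d$ and $e$ with $d+e=g$. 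Conversely, any such pair $(U_{M},U_{N})$ assembles into a fixed point. This gives the desired canonical identification of $Gr^{\Pi}_{g}(M\oplus N)^{\mathbb{C}^{\times}}$ with $\bigsqcup_{d+e=g}Gr^{\Pi}_{d}(M)\times Gr^{\Pi}_{e}(N)$.

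Finally I would conclude using two standard properties of the topological Euler characteristic: that $\chi(X)=\chi(X^{\mathbb{C}^{\times}})$ for a complex algebraic $\mathbb{C}^{\times}$-variety (e.g.\ via the Bialynicki--Birula decomposition, or directly from the fact that Euler characteristic is additive on constructible stratifications and that non-fixed orbits are isomorphic to $\mathbb{C}^{\times}$, hence contribute zero), and that $\chi$ is multiplicative on products. Combining the two with the fixed-locus description yields the stated formula. There is no serious obstacle: the only point requiring any care is verifying that the $\mathbb{C}^{\times}$-action is well defined on the quiver Grassmannian (i.e.\ commutes with $\phi_{a}$), which is immediate from the direct-sum structure.
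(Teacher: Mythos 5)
Your argument is correct and is essentially the standard Caldero--Chapoton proof; the paper itself gives no proof of this lemma, simply citing Prop.\ 3.6 of \cite{caldero2004cluster}, whose argument is exactly the $\mathbb{C}^{\times}$-action you describe. The only point worth noting is that their statement is for modules over a path algebra, but as your write-up makes clear the argument applies verbatim to $\Pi$-modules since the structure maps are block-diagonal on $M\oplus N$.
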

The following two lemmas are proved after the proof of lemma 5.
\begin{lemma}
$R(M,e)/ <b_{1(d_1-e_1)}> \cong R(M^{\prime},e)$.

\end{lemma}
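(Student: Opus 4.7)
The plan is to compare the two rings via the explicit presentation in Corollary 1. Both $R(M,e)$ and $R(M',e)$ are quotients of the same polynomial algebra $k[a_{ij},b_{ik}]$ by (i) the common relations $a_ib_i=1$ for $i\in I$ and (ii) the family of degree bounds indexed by chamber weights $\gamma\in\Gamma$,
\[
\deg\!\Big(\prod_{i\in I_\gamma^+}(a_i)^{\gamma_i^+}\prod_{i\in I_\gamma^-}(b_i)^{\gamma_i^-}\Big) \leq (\gamma,\nu) + D_{-\gamma}(\,\cdot\,).
\]
So the only way the two ideals differ is through the integers $D_{-\gamma}(M)$ versus $D_{-\gamma}(M')$. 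The strategy is to identify precisely one chamber weight where these change, by exactly one, and translate the resulting sharpened bound into the single relation $b_{1(d_1-e_1)}=0$.

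First, I would apply Lemma 2 to rewrite every difference as a kernel-dimension difference: $D_{-\gamma}(M)-D_{-\gamma}(M')=\dim\ker\phi_\gamma(M)-\dim\ker\phi_\gamma(M')$. Since $M$ is $I$-compatible, Proposition 4 decomposes $M$ into one-direction summands, and the block-move operation from the proof of Lemma 5 that produces $M'$ is local: it modifies a single one-direction summand by shortening it at exactly one endpoint, which by construction lies at vertex $1$. A case-by-case analysis on $\gamma$ using this explicit Jordan-chain description should show $D_{-\gamma}(M)=D_{-\gamma}(M')$ for every $\gamma$ except $\gamma_0=-\varpi_1$, for which $D_{-\gamma_0}(M)=D_{-\gamma_0}(M')+1$.

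Given this, the bound in $R(M,e)$ at $\gamma_0$ is $\deg(b_1)\leq d_1-e_1$ and the bound in $R(M',e)$ at $\gamma_0$ is $\deg(b_1)\leq d_1-e_1-1$. Since $b_1=1+b_{11}t^{-1}+\cdots+b_{1(d_1-e_1)}t^{-(d_1-e_1)}$ in $R(M,e)$, the tightening is equivalent to imposing the single extra relation $b_{1(d_1-e_1)}=0$. Every other degree bound, and the relations $a_ib_i=1$, are identical in the two presentations, so the natural ring map $R(M,e)/\langle b_{1(d_1-e_1)}\rangle\to R(M',e)$ sending $a_{ij}\mapsto a_{ij}$, $b_{ik}\mapsto b_{ik}$ is a well-defined isomorphism.

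The main obstacle is the kernel-dimension bookkeeping in Step 1. For chamber weights $\gamma$ that pair nontrivially with many simple coroots, $\phi_\gamma$ is a composition of several maps $\phi_{ij}$, and a priori the block move could shift its kernel dimension by more than one or at several $\gamma$. Handling this cleanly requires exploiting that $M$ decomposes as a direct sum of one-direction modules and that the move is supported on a single Jordan chain, so that the contribution of each summand to $\dim\ker\phi_\gamma$ can be computed independently and almost all contributions are preserved.
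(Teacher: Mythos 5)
Your proposal mirrors the paper's own (very terse) argument: both present $R(M,e)$ and $R(M',e)$ as quotients of the same polynomial ring by ideals indexed by the same set of chamber weights, with the only possible difference residing in the integers $D_{-\gamma}(M)$ versus $D_{-\gamma}(M')$, and both then assert that the change is concentrated at $\gamma=-\varpi_1$, giving exactly the relation $b_{1(d_1-e_1)}=0$. So the overall route is the same as the paper's.

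However, the precise claim you state in Step 1 --- that $D_{-\gamma}(M)=D_{-\gamma}(M')$ for every chamber weight $\gamma\neq-\varpi_1$ --- is too strong and is not what actually happens. After the dual is taken so that $e_{i1}\in M_1$ is a head of a Jordan chain, all maps $\phi_{1j}$ (for $j\neq 1$) vanish on $M$, so $M_1$ lies entirely in $\ker\phi_\gamma$ for \emph{every} $\gamma$ with $1\in I_\gamma^-$. Removing $e_{i1}$ therefore drops $D_{-\gamma}=\dim\ker\phi_\gamma$ by one for every such $\gamma$, not just for $\gamma=-\varpi_1$. Already in the $A_2$ case with $M=(k\to k)\oplus(k\to 0)$ one sees the drop at both $\gamma=-\varpi_1$ and $\gamma=s_1\varpi_1$. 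What the argument actually requires is the weaker (but subtler) statement that the sharpened degree bounds at the other $\gamma$'s with $1\in I_\gamma^-$ are \emph{already consequences} of the relations of $R(M,e)$ together with $b_{1(d_1-e_1)}=0$; e.g.\ because for such $\gamma$ the bound $\deg(\prod a_i^{\gamma_i^+}\prod b_i^{\gamma_i^-})\leq(\gamma,\nu)+D_{-\gamma}$ is not an independent generator of the ideal but is dominated by the sum of the $\varpi_j$- and $-\varpi_j$-bounds with the tightened $b_1$ bound. You correctly flag the kernel bookkeeping as the main obstacle, but the conclusion you expect from that bookkeeping should be revised to this "implied relations" statement rather than equality of the constants $D_{-\gamma}$; the paper's own proof has this same gap, so you are in the same position, but the specific sub-claim as you wrote it would fail.
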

\begin{lemma}
$b_{1(d_1-e_1)}R(M,e)$ is a module over $R(M^{\prime\prime},e-\sum_{i \in I} \alpha_i)$.

\end{lemma}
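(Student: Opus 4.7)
The plan is to exhibit $b_{1(d_1-e_1)}R(M,e)$ as a module over $R(M'', e-\sum_i\alpha_i)$ by a natural action: each generator of $R(M'', e-\sum_i\alpha_i)$, which has the form $a_{ij}$ or $b_{ik}$ (now interpreted inside $R(M'', e-\sum_i\alpha_i)$), acts on $b_{1(d_1-e_1)}R(M,e)$ by multiplication by the same-named element in $R(M,e)$. Here $M''$ is the $\Pi$-module obtained from $M$ by splitting off a one-direction summand of dimension vector $\sum_i\alpha_i$, which exists by Proposition 5 and the $I$-compatibility of $M$. The content of the lemma is then that this prescription is well defined: for every element $f$ of the defining ideal $I(M'',e-\sum_i\alpha_i)$, its image in $R(M,e)$ (under the same-named-generator map), multiplied by $b_{1(d_1-e_1)}$, must vanish in $R(M,e)$.

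The relations $a_ib_i=1$ already hold in $R(M,e)$ and pose no difficulty. What remains are the degree inequalities. For each $\gamma\in\Gamma$, let $P_{\gamma,d}$ denote the coefficient of $t^{-d}$ in $\prod_{i\in I_\gamma^+}a_i^{\gamma_i^+}\prod_{i\in I_\gamma^-}b_i^{\gamma_i^-}$; the defining relations of $I(M'',e-\sum_i\alpha_i)$ say $P_{\gamma,d}=0$ for $d > (\gamma,\nu-\sum_i\alpha_i)+D_{-\gamma}(M'')$, while those of $I(M,e)$ say $P_{\gamma,d}=0$ for $d > (\gamma,\nu)+D_{-\gamma}(M)$. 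The numerical input is additivity of the Baumann--Kamnitzer functions $D_{-\gamma}$ under direct sums, which gives $D_{-\gamma}(M)=D_{-\gamma}(M'')+D_{-\gamma}(N)$ for the removed summand $N$, together with the uniform bound $(\gamma,\sum_i\alpha_i)+D_{-\gamma}(N)\geq d_1-e_1$ for every $\gamma$. This last estimate follows from a direct calculation on a one-direction module via Lemma 2.

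Given these bounds, the product $P_{\gamma,d}\cdot b_{1(d_1-e_1)}$ has ``shifted degree'' $d+(d_1-e_1)$ exceeding the $R(M,e)$-bound $(\gamma,\nu)+D_{-\gamma}(M)$, and so ought to be forced into $I(M,e)$. The main obstacle is that multiplying by $b_{1(d_1-e_1)}$ is not literally the same as reading a coefficient of a Laurent monomial indexed by a chamber weight, since raising the exponent of $b_1$ by one formally shifts the weight from $\gamma$ to $\gamma-\varpi_1$, which need not lie in $\Gamma$. To resolve this I would expand $P_{\gamma,d}\cdot b_{1(d_1-e_1)}$ as a sum of coefficients attached to genuine chamber-weight monomials, using the $a_jb_j=1$ identities to rebalance exponents, and then invoke the corresponding degree conditions in $I(M,e)$ term by term. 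In type A this expansion is combinatorially transparent because the chamber weights admit an explicit description, but the bookkeeping across all $\gamma\in\Gamma$ simultaneously is the technically delicate part of the proof.
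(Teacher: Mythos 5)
Your high-level strategy matches the paper's: lift generators of $R(M'',e-\sum_i\alpha_i)$ to $R(M,e)$, act by multiplication, and reduce well-definedness to showing $b_{1(d_1-e_1)}\cdot\pi\bigl(I(M'',e-\sum_i\alpha_i)\bigr)\subset I(M,e)$; additivity of the functions $D_{-\gamma}$ under direct sum (via Lemma~2 they are kernel dimensions of $\phi_\gamma$, which are additive) is also the right numerical input. However, there are two genuine gaps.

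First, the key inequality is written with the wrong orientation. What the argument needs is
\[
(\gamma,\textstyle\sum_i\check\alpha_i)+D_{-\gamma}(N)\ \le\ d_1-e_1,
\]
so that a coefficient in degree $d>v(M'',\gamma,e-\sum_i\alpha_i)$, once shifted by $d_1-e_1$, lands beyond $v(M,\gamma,e)=v(M'',\gamma,e-\sum_i\alpha_i)+\bigl[(\gamma,\sum_i\check\alpha_i)+D_{-\gamma}(N)\bigr]$. You wrote $\ge$; with $\ge$ the conclusion ``exceeds the $R(M,e)$-bound'' does not follow. The paper in fact establishes the much sharper statement that this gap $v(P,\gamma,\sum_i\alpha_i)=(\gamma,\sum_i\check\alpha_i)+D_{-\gamma}(P)$ equals $0$ or $1$ for the removed one-direction summand $P$ (and equals $0$ when $\gamma_1=1$), which is what makes the comparison with $d_1-e_1\ge 1$ work uniformly; your bound as stated is not only misoriented but would also be false for, e.g., $\gamma=\pm\varpi_1$ when $d_1-e_1\ge 2$.

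Second, the ``rebalancing'' paragraph is exactly where the real work lives, and you defer it. The paper resolves it by a case split on $\gamma_1=\langle\gamma,\check\alpha_1\rangle\in\{-1,0,1\}$ (available in type~ADE): when $\gamma_1=-1$ one rewrites $b_{1(d_1-e_1)}a_{1j}$ using $a_1b_1=1$ and pushes the result past the degree bound for $\gamma$ itself; when $\gamma_1=0$ one must first show $\gamma-\varpi_1\in\Gamma$ (this is where the refinement $v(P,\gamma,\sum_i\alpha_i)=1$ enters) and then compare against the bound $v(M,\gamma-\varpi_1,e)$ for the \emph{shifted} chamber weight, which requires a separate estimate coming from comparing $\ker\phi_{\gamma-\varpi_1}$ with $\ker\phi_\gamma$. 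Your proposal correctly flags that multiplying by $b_{1(d_1-e_1)}$ moves the exponent vector off $\Gamma$, but it leaves unaddressed that after rebalancing the relevant target bound is $v(M,\gamma',e)$ for a \emph{different} $\gamma'$, not $v(M,\gamma,e)$; without that, ``exceeds the $R(M,e)$-bound'' is not a well-posed conclusion.
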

\begin{proof}[Proof of lemma 5]\footnote{The rest of this chapter is not well-written and  should be revised.}

We index the basis vector according to the Young diagram of $\phi$ as before but slightly different: $e_{ij}$ corresponds to the block of $i^{th}$ row (from up to down) and $j^{th}$ column (from right to left, which is the difference from before and this will cause the problem that two blocks in the same column but different row have different j but we will fix an i sooner so will not be of trouble). so  $\phi(e_{ij})=e_{i(j-1)}$.\\
The basis vector in $M_1$ appears in the first column or in the last column. If it lies in the last, we can take the dual to make it in the first. So,
there exist i such that $e_{i1}\in M_1$. \\Let $\lambda^{\prime}$ be the young diagram removing the block of $e_{i1}$ from the original one and $M^{\prime}$ be the corresponding module.
Let $\lambda^{\prime \prime}$ be the young diagram removing $i^{th}$ row and $M^{\prime \prime}$ be the corresponding module.
Apply lemma 6, we have
$$\chi(Gr^{\Pi}_e(M))=\chi(Gr_{e}(M^{\prime}))+\chi(Gr_{e-\sum \alpha_i}(M^{\prime \prime})).$$
We count the dim of $R(M,e)$ by dividing it into two parts.\\$$
dimR(M,e)=dim b_{1(d_1-e_1)} R(M,e)+ dim R(M,e)/ b_{1(d_1-e_1)} R(M,e).$$

By lemma 7 and 8, since $b_{1(d_1-e_1)}R(M,e)$ is acyclic, \\
$$dim b_{1(d_1-e_1)}R(M,e) \leq dimR(M^{\prime\prime},e-\sum_{i \in I} \alpha_i).$$So 
$dim R(M,e)=dim b_{1(d_1-e_1)} R(M,e))+ dim R(M,e)/ b_{1(d_1-e_1)} R(M,e)\\
\leq dim R(M^{\prime\prime},e-\sum_{i \in I} \alpha_i)
+dim  R(M^{\prime},e)=\chi (Gr_{e}(M^{\prime})+ \chi (Gr_{(e-\sum_{i \in I} \alpha_i) }  (M^{\prime \prime}))=
\chi (Gr^{\Pi}_{e}(M))$.
\end{proof}
Now we prove lemma 7 and 8.
\begin{proof}[Proof of lemma 7]

Recall $I(M)=\text{deg}\prod_{i\in I{+}} 
(t_i)^{\gamma_i}\prod_{i\in I{-}} 
(s_i)^{\gamma_i}\leq (\gamma,\nu)+D_{-\gamma}(M) $
We denote $v(M,\gamma,e)=(\gamma,\nu)+D_{w_{0}\gamma}(M)$.
The difference between $I(M)$ and $I(M)^{\prime}$ only occurs when $\gamma= -\varpi_1$. In this case $v(M^{\prime},-\varpi_1,e)=v(M,-\varpi_1,e)-1$. The degree of $s_1$ goes down by by 1, meaning we have one more vanishing condition which is $b_{1(d_1-e_1)}=0$.
\end{proof}
\begin{proof}[Proof of lemma 8]

In order to define a module structure on $b_{1(d_1-e_1)}R(M,e)$, we lift the element in $R(M^{\prime\prime},e-\sum_{i \in I} \alpha_i)$ to $R(M,e)$ (since the former is a quotient of the latter) and let it act on $b_{1(d_1-e_1)}R(M,e)$ by multiplication.
We denote J to be the degree  $v(M,\gamma,e)$ part of $<\prod_{i\in I{+}} 
(t_i)^{\gamma_i}\prod_{i\in I{-}} 
(s_i)^{\gamma_i},\gamma\in \Gamma>$.\\

We need to check it is independent of the choice of the lift: $$b_{1(d_1-e_1)}J \subset I(M,e), \textup{ where }
I(M,e)=J\oplus I(M^{\prime\prime},e-\sum_{i \in I} \alpha_i).$$

Denote the module corresponding to $i^{th}$ row P.
 We have $M=M^{\prime\prime}\oplus P$. Then 
$v(M,\gamma,e)-v(M^{\prime\prime},\gamma,\sum_{i \in I} \alpha_i)=v(P,\gamma,\sum_{i \in I} \alpha_i)$.\\
We claim that $v(P,\gamma,\sum_{i \in I} \alpha_i)=0 \ or \ 1$
and is 0 when $\gamma_{1}=1$.\\
This is a direct calculation.

When $\gamma_{1}=-1$, $t_{1j}$ appears in each summand  $\prod_{i\in I{+}} 
(t_i)^{\gamma_i}\prod_{i\in I{-}} 
(s_i)^{\gamma_i},\gamma\in \Gamma$. Let one of the summand be 
$t_{1j}k$.\\
$s_{1(d_1-e_1)} t_{1j} k=-\sum_{p+q=d_1-e_1+j} s_{1p}t_{1q}k
=-s_{1(d_1-e_1+j-q)}\sum_{q>j} t_{1q}k$.
We have $\sum_{q>j} t_{1q}k$ is in $I(M,e)$ since this is of degree larger than $v(M,\gamma,e)$.\\
When $\gamma_1 =0$,\\
we claim that when 
$v(P,\gamma,\sum_{i \in I} \alpha_i)$ is 1 , we have $\gamma-\varpi_1 \in \Gamma$. \\
Then we want to show 
$s_{1(d_1-e_1)}\prod_{i\in I{+}} 
(t_i)^{\gamma_i}\prod_{i\in I{-}} 
(s_i)^{\gamma_i}$ is in $I(M,e)$.
Let k is a summand of degree $v(M,\gamma,e)$
part of $I(M,e)$.
We want to show $s_{1(d_1-e_1)}k$ is of degree $v(M,\gamma-\varpi_1,e)+1$.
So we need $v(M,\gamma,e)+d_1-e_1\geq v(M,\gamma-\varpi_1,e)+1$. By the
dimension description, the image of $\Phi_{\gamma-\varpi_1}$ is 
at least 1 dimensional bigger than the image of $\Phi_{\gamma}$ since $\phi_{1m}(e_{i1})= e_{im}$ is in the image but for $\gamma-\varpi_1$ (since $\phi_{1m}$ is not a summand of $\phi$) the projection of $img(\phi)$ on $V_m$ is zero, where m is the smallest number in $\Gamma_+$.
\end{proof}
\begin{theorem}
$\Psi$ is an isomorphism when M is a kQ-module.
\end{theorem}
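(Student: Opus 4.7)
The plan is to deduce the theorem from three ingredients already assembled in the paper: surjectivity of $\Psi$, the affine paving of $Gr^{\Pi}_e(M)$, and the dimension upper bound for $I$-compatible modules. My first step is to reduce the assertion to a dimension count: by Lemma 5(b), $\Psi$ is already surjective, so it is enough to show that $\cO(X(M)^T_\nu)$ and $H^*(Gr^{\Pi}_e(M))$ have the same dimension as $k$-vector spaces, where $e_i=(\nu,\varpi_i)$.

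Second, I would compute the dimension on the cohomology side. By Lemma 3, for a $kQ$-module $M$ in type $A$ the quiver Grassmannian $Gr^{\Pi}_e(M)$ admits an affine paving; Proposition 3 then gives vanishing of odd cohomology and the identification $\dim_k H^*(Gr^{\Pi}_e(M)) = \chi(Gr^{\Pi}_e(M))$.

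Third, I would bound the dimension on the ring side by invoking the dimension lemma (the one asserting $\dim R(M,e)\leq \chi(Gr^{\Pi}_e(M))$ for $I$-compatible $M$). For this I need to verify that every $kQ$-module in type $A$ is $I$-compatible in the sense of Definition 3. The plan is to use the classification of indecomposables: in type $A$ each indecomposable $kQ$-module is thin and supported on a connected subinterval of the Dynkin diagram, so the nilpotent operator $\phi = \oplus_a \phi_a$ acts by successive identifications of one-dimensional weight spaces sitting at distinct vertices. This gives a Jordan basis of $V=\oplus M_i$ in which each basis vector lies in a single $M_r$, and the general case follows by taking the direct sum of such bases on indecomposable summands. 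Granting this, the dimension lemma yields $\dim_k \cO(X(M)^T_\nu)=\dim_k R(M,e) \leq \chi(Gr^{\Pi}_e(M))$.

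Chaining the inequalities through surjectivity gives
$$\dim_k H^*(Gr^{\Pi}_e(M)) \leq \dim_k \cO(X(M)^T_\nu) \leq \chi(Gr^{\Pi}_e(M)) = \dim_k H^*(Gr^{\Pi}_e(M)),$$
so equality holds throughout and the surjection $\Psi$ between finite dimensional vector spaces of equal dimension is an isomorphism. The main obstacle in this plan is the $I$-compatibility check: Definition 3 asks for a Jordan basis of the nilpotent operator $\phi=\oplus_a \phi_a$ whose vectors each lie in a single $M_r$, and for an arbitrary orientation of a type $A$ quiver this must be argued from the indecomposable classification rather than taken for granted; a secondary worry is ensuring the $I$-compatibility definition and the statement of the dimension lemma apply uniformly in $e$ (equivalently in $\nu$), which I would address by treating each connected component $X(M)^T_\nu$ separately.
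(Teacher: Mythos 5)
Your proposal is correct and follows essentially the same route as the paper's own argument: combine surjectivity of $\Psi$ (the paper's Lemma 4(b)), the affine paving of $Gr^{\Pi}_e(M)$ and consequent odd-degree vanishing giving $\dim_k H^* = \chi$, and the dimension bound $\dim R(M,e) \leq \chi$ from Lemma 5, to conclude that a surjection between finite dimensional spaces of equal dimension is an isomorphism. Where you add value is in flagging that Lemma 5 only applies to $I$-compatible modules, a hypothesis the paper never explicitly checks for $kQ$-modules. Your verification — indecomposable type $A$ representations are thin interval modules, so $\phi=\oplus_a\phi_a$ is a single Jordan block on each summand with Jordan basis vectors each in a single $M_r$, and the Jordan basis of a direct sum is the union of Jordan bases — is correct precisely for the linear orientation that Lemma 3 actually assumes. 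Be aware, however, that for a non-linear orientation this claim genuinely fails: for $1 \to 2 \leftarrow 3$ the interval module $[1,3]$ has $\phi(e_1)=\phi(e_3)=e_2$, Jordan type $(2,1)$, and any Jordan basis must contain a vector proportional to $e_1-e_3$ which is not homogeneous; so the module is not $I$-compatible. Thus $I$-compatibility of $kQ$-modules is an orientation-dependent fact rather than a formal consequence of the type $A$ classification, and your proof is valid only because it inherits the linear-orientation hypothesis already built into Lemma 3.
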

\begin{proof}
by lemma 4, $\Psi$ is surjective so dim $k[a_{ij},b_{ik}]/I(M,e))\geq \chi (Gr^{\Pi}_e(M))$ and by lemma 5 this is an equality so the theorem follows.
\end{proof}

\section{A consequence of this conjecture}

In section 3, we defined $\cG(G,Y)$ as moduli of maps of between pairs form $(d,d^*)$ to $(G/Y,pt)$.
This is actually a local version of (fiber at a closed point c) the global loop Grassmannian with a condition Y to a curve $C$, $\cG^{C}(G,Y)$.
To a curve C, define $\cG^{C}(G,Y)$ over the ran space $\cR_C$ with the fiber at $E\in \cR_C$:
$$\cG^{C}(G,Y)_{E}=^{def} map[(C,C-E), (G/Y, pt)].$$
Denote the map from $\cG^{C}(G,Y)$ to $\cR_C$  remembering the singularities by $\pi$.

One can ask if $\pi$ is (ind) flat for any $G$ and $(Y,pt)$.
The case we are concerned is when $G^{\prime}=G\times \prod_{w} T_w$ and $Y=\prod_{w}(G/N^{w})^{aff}.$
Let $c\in C, \underline{\lambda_w}, \underline{\mu_w}\in X_*(T)^{W}.$
In particular, we restrict $\cG^{C}(G^{\prime},Y)$ to $C\times c$ and denote the image under projection from $\cG(G^{\prime}$ to $\cG(G)$ by $X$.
We have $X$ is a closed subscheme of $Gr_{G, X\times c}$.
Explicitly, an $R$-point of $Gr_{G, X\times c}$ consists of the following data
\begin{itemize}
\item $x: specR\xrightarrow[]{} C$. Let  $\Gamma_x$
be the graph of x. Let $\Gamma_c$ be the graph of the constant map taking value c.
\item
$\beta$ a G-bundle on spec$R\times C$.
\item
A trivialization $\eta: \beta_0 \xrightarrow[]{\eta} \beta $ defined on $specR\times C- (\Gamma_x\bigcup \Gamma_c)$.

\end{itemize}

An $R$-point of $X$ over $C\times c$ consists of an $R$-point of $Gr_{G, X\times c}$ subject to the condition:
For every $i\in I$,
the composition
$$\eta_i : \beta_0 \times^{G} V(\varpi_i) \xrightarrow[]{}  \beta \times^{G} V(\varpi_i) \xrightarrow[]{} \beta \times^{G} V(\varpi_i) \otimes \cO(\langle \gamma, \lambda_w \rangle \cdot \Gamma_x + \langle \gamma, \mu_w \rangle \cdot \Gamma_c) .$$
is regular on all of $specR\times C$.

We can show the fiber over a closed point other than $c$ is $\bigcap \overline{S^{w}_{\lambda_w}}\times \bigcap \overline{S^{w}_{\mu_w}}$ and the fiber over $c$ is $\bigcap \overline{S^{w}_{\lambda_w+\mu_w}}$.

\begin{cor}[Given the conjecture]
The T-fixed point subscheme of this family is flat.

\end{cor}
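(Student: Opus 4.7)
My plan is to reduce flatness of the $T$-fixed family over $C$ to a numerical identity between the lengths of fibers. Since each connected component of the $T$-fixed subscheme lives in a fixed ind-affine piece (indexed by $\nu\in X_*(T)$) whose closed fibers are finite $k$-schemes, and $C$ is a smooth curve, flatness over $C$ is equivalent to the coordinate ring of each fiber having constant $k$-dimension. Using the explicit description of the fibers quoted just before the corollary, and the fact that taking $T$-invariants commutes with taking fibers of a $T$-equivariant family, I am reduced to showing
$$\dim_k \cO\!\left(X(M)^T\right)\cdot \dim_k \cO\!\left(X(N)^T\right)\;=\;\dim_k \cO\!\left(X(M\oplus N)^T\right),$$
where $M,N$ are the generic $\Pi$-modules with associated polytope data $(\lambda_w)$ and $(\mu_w)$.

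First I would verify that the data $(\lambda_w+\mu_w)$ appearing on the fiber over $c$ is exactly the polytope data of $M\oplus N$. This amounts to the additivity $D_\gamma(M\oplus N)=D_\gamma(M)+D_\gamma(N)$ of the Baumann--Kamnitzer constructible functions, which follows directly from their inductive definition in \cite{MR2892443}. With this identification, the generic fiber of the $T$-fixed family is $X(M)^T\times X(N)^T$ while the special fiber is $X(M\oplus N)^T$, and the length computation reduces to a statement purely on the quiver side.

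Next I would feed both sides through the conjecture. By the conjecture, summed over the decomposition $X(L)^T=\bigsqcup_\nu X(L)_\nu^T$, we have $\dim_k\cO(X(L)^T)=\dim_k H^*(Gr^\Pi(L))$ for $L\in\{M,N,M\oplus N\}$. Lemma 3 furnishes an affine paving of $Gr^\Pi(L)$ in the type A setting, so odd cohomology vanishes and this dimension equals the Euler characteristic $\chi(Gr^\Pi(L))$. The Caldero--Chapoton formula (lemma 6), summed over dimension vectors, then yields
$$\chi(Gr^\Pi(M\oplus N))\;=\;\chi(Gr^\Pi(M))\cdot \chi(Gr^\Pi(N)),$$
which matches the product of generic-fiber lengths with the length of the special fiber and hence gives the desired flatness.

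I expect the main obstacle to be extracting and justifying the $D_\gamma$ additivity used in the first step: morally it is standard but it has to be deduced cleanly from the definition of $D_\gamma$ in \cite{MR2892443} together with the compatibility between direct sums of generic modules and Minkowski sums of MV polytopes. A secondary limitation is that invoking lemma 3 and lemma 6 keeps the argument inside the type A / $kQ$-module range where odd cohomology is known to vanish; pushing beyond that requires an independent purity or odd-vanishing input for $Gr^\Pi(M\oplus N)$ to justify passing from $\dim H^*$ to $\chi$.
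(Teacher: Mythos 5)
Your proposal takes essentially the same route as the paper's one-line computation: flatness over a smooth curve is read off from constancy of fiber lengths, which is obtained by invoking the conjecture twice (once for the special fiber, once for each factor of the generic fiber) and then applying the Caldero--Chapoton multiplicativity of Euler characteristics (lemma 6). The two points you flag as needing care, namely the additivity $D_\gamma(M\oplus N)=D_\gamma(M)+D_\gamma(N)$ needed to identify $(\lambda_w+\mu_w)$ with the polytope data of $M\oplus N$, and the odd-cohomology vanishing needed to pass from $\dim H^*$ to $\chi$, are used implicitly in the paper's footnote, so your making them explicit is a genuine (if minor) improvement on the exposition rather than a different argument.
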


\begin{proof}
$dim \cO((\overline{ \bigcap S^{w}_{\lambda_w+\mu_w}}^{T})_{\nu})=dim H^*(Gr^{\Pi}_e (M))=\footnote{By lemma 6, and given the conjecture, Euler character is the same as total cohomology.}\sum_{e_1+e_2=e} dim H^*(Gr^{\Pi}_{e_1} (M_1) dim H^*(Gr^{\Pi}_{e_2} (M_2)= \sum_{\nu_1 +\nu_2=\nu } \cO(\overline {\bigcap S^{w}_{\lambda_w}}^{T}_{\nu_1})  \cdot \cO(\overline{ \bigcap S^{w}_{\mu_w}}^{T}_{\nu_2})
=dim \cO( 
\bigsqcup_{\nu_1 +\nu_2=\nu } \overline{ \bigcap S^{w}_{\lambda_w}}^{T}_{\nu_1}\times \overline{ \bigcap S^{w}_{\mu_w}}^{T}_{\nu_2}=
dim (\overline{ \bigcup S^{w}_{\lambda_w}}^{T}\times \overline{ \bigcap S^{w}_{\lambda_w}}^{T})_{\nu}.$
\end{proof}

\begin{conj}
T-fixed subschemes flatness imply flatness.
\end{conj}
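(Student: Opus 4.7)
The plan is to promote the flatness of the $T$-fixed point subscheme to flatness of the whole family by combining a $T$-weight decomposition of the structure sheaf with a Białynicki-Birula style attractor argument that controls each weight piece in terms of the $T$-fixed data.

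First I would work locally on the base $S=\cR_C$ (say on an affine open $U$) and view the pushforward $\pi_*\cO_X$ as a $T$-equivariant coherent $\cO_U$-module; since $T$ acts trivially on the base, this sheaf decomposes canonically into weight spaces $\pi_*\cO_X=\bigoplus_\chi M_\chi$ indexed by characters $\chi$ of $T$, each $M_\chi$ coherent on $U$. Flatness of $\pi$ is equivalent to each $M_\chi$ being locally free of constant rank on $U$, and the analogous decomposition for $\pi_*\cO_{X^T}=\bigoplus_\chi M^T_\chi$ has all its weight pieces locally free by the corollary just established.

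Next I would pick a generic dominant one-parameter subgroup $\lambda\colon G_m \to T$ and use the attractor flow it induces on $X\subset \cG(G')$. On the ambient loop Grassmannian $\lambda$ contracts each semi-infinite orbit closure $\overline{S^w_\mu}$ onto its $T$-fixed point $L_\mu$, and the intersection $X$ inherits this contraction; the explicit description on each Plücker coordinate is already visible in the proof of Theorem 2, where the coefficient $w\varpi_i(g)\,t^{-A_{w\varpi_i}}$ is a weight vector for $T$. A Białynicki-Birula type decomposition should then produce, for each weight $\chi$, a canonical filtration on $M_\chi$ whose associated graded is a direct summand of $\pi_*\cO_{X^T}$ — morally $M_\chi$ is built out of $M^T_\chi$ by successive extensions of shifted $T$-weight pieces. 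Since $M^T_\chi$ is locally free on $U$ by hypothesis, each $M_\chi$ is then locally free on $U$ as well, giving flatness of $\pi$.

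The hard part will be making the attractor argument rigorous in this ind-scheme setting. The semi-infinite orbit closures have both infinite dimension and infinite codimension in $\cG(G)$, their scheme-theoretic intersections can be non-reduced and singular, and the standard BB-decomposition theorems are written for smooth projective varieties. One must check that the limit of an $R$-point of $X$ under $\lambda(t)$ as $t\to 0$ exists in the ind-scheme sense and lies again in $X$, and that the resulting attracting cells form a stratification whose closure relations are compatible with the family structure over $S$. A secondary difficulty is that $X^T=\bigsqcup_\nu X^T_\nu$ is indexed by $\nu\in X_*(T)$, so one has to verify that the attractor groups the weight pieces $M_\chi$ in a manner compatible with the fiberwise $\nu$-decomposition — i.e.\ that no mass escapes to infinity between different $\nu$-components as one moves in $S$. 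I expect both points to require an explicit analysis of the BD Grassmannian model and of the degree inequalities from Theorem 2 rather than any soft argument.
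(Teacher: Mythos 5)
The paper does not prove this statement; it is stated as an open \emph{conjecture}, with only the remark that it is known to hold in the special case $\lambda_w=-w_0\lambda+w\lambda$ (so that $\overline{\bigcap S^w_{\lambda_w}}=\overline{Y^\lambda}$). So there is no proof in the paper to compare against, and your proposal should be judged on its own merits.

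Read on its own, the proposal has a genuine gap, and the gap is exactly the one you flag but do not resolve. The bare implication ``$X^T\to S$ flat $\Rightarrow X\to S$ flat'' for a $T$-equivariant family with $T$ acting trivially on $S$ is false. Take $S=\mathrm{Spec}\,k[t]$, $X=\mathrm{Spec}\,k[x,y]/(xy)$, $\pi(x,y)=x=t$, and let $T=G_m$ scale $y$ with weight $1$ and fix $x$. Then $X^T=\{y=0\}\cong \mathrm{Spec}\,k[x]$ maps isomorphically to $S$, hence is flat; but the fibers of $\pi$ jump in dimension at $t=0$, so $\pi$ is not flat. Tracing this through your argument shows exactly where it breaks: the attracting cells of the $G_m$-flow on a singular $X$ are not affine bundles over $X^T$, so there is no filtration of $\pi_*\cO_X$ whose associated graded is controlled by $\pi_*\cO_{X^T}$. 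The Bia{\l}ynicki-Birula machinery gives such a structure only under smoothness (or at least local complete intersection plus some cohomological vanishing), and the scheme-theoretic intersections $\bigcap_w\overline{S^w_{\lambda_w}}$ are precisely the kind of singular, possibly non-reduced objects where this fails. Consequently the conjecture cannot be a consequence of a soft $T$-weight-decomposition plus BB argument; any proof must use specific geometric input about these particular intersections inside the Beilinson--Drinfeld Grassmannian (for instance, a stronger a priori filtration coming from the factorization structure, or Cohen--Macaulayness / Frobenius-splitting arguments as in Kamnitzer--Muthiah--Weekes for the $\overline{Y^\lambda}$ case). Your proposal supplies no such input, so the step from ``each $M^T_\chi$ locally free'' to ``each $M_\chi$ locally free'' is unjustified.
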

We take $\lambda_w=-w_0 \lambda+w\lambda$, then
$\overline{\bigcap S^{w}_{\lambda_w}}=\overline{Y^{\lambda}}$. In this case the conjecture is proved to be true.
This flatness is mentioned in \cite{kamnitzer2016reducedness} remark 4.3 and will reduce the proof of reduceness of $\overline{Y^{\lambda}}$ to the case when $\lambda$ is $\varpi_i$ for each $i\in I$.

\section{Appendix}



For an expression $s_{i_{m}}\cdots s_{i_{1}}$ of an element w in W, we say it is j-admissible if $\langle \alpha_{i_a},s_{i_{a-1}}\cdots s_{i_{1}}\varpi_j\geq0$ for any $a\leq m$.
\begin{lemma}

For any element $w\in W$, any reduced expression of $w$ is j-admissible.(since we will fix an j, we will omit j and just say admissible).
\end{lemma}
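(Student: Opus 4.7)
The plan is to reduce the inequality to the standard fact that positive roots pair non-negatively with dominant weights, using the characterization of reduced expressions via length-increasing simple reflections.

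First I would observe a preliminary closure property of reduced expressions: if $s_{i_m}\cdots s_{i_1}$ is a reduced expression for $w$, then every prefix $s_{i_a}\cdots s_{i_1}$ (with $a\le m$) is also reduced. Indeed, if some prefix admitted a shorter expression, splicing it into the original would produce an expression for $w$ of length $<m$, contradicting reducedness. Consequently, for each $a$, passing from $w_a = s_{i_{a-1}}\cdots s_{i_1}$ to $s_{i_a}w_a$ is length-increasing, so by the standard exchange/length criterion $w_a^{-1}\alpha_{i_a}$ lies in $\Phi^+$.

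Next I would rewrite the pairing to be checked using $W$-invariance of $\langle\ ,\ \rangle$:
\[
\langle \alpha_{i_a},\, s_{i_{a-1}}\cdots s_{i_1}\varpi_j\rangle \;=\; \langle \alpha_{i_a},\, w_a\varpi_j\rangle \;=\; \langle w_a^{-1}\alpha_{i_a},\, \varpi_j\rangle.
\]
By the step above, $w_a^{-1}\alpha_{i_a}$ is a positive root, say $\beta=\sum_i c_i\alpha_i$ with $c_i\in\ZZ_{\ge 0}$. Since $\varpi_j$ is fundamental dominant, in the simply-laced setting $\langle \alpha_i,\varpi_j\rangle = \delta_{ij}$, so $\langle \beta,\varpi_j\rangle = c_j \ge 0$. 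This gives the desired inequality at position $a$, and since $a$ was arbitrary the expression is $j$-admissible.

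There is no real obstacle: the content is a one-line manipulation once one identifies the two standard ingredients (prefixes of reduced expressions are reduced; positive roots pair non-negatively with dominant weights). The only thing to be slightly careful about is making the passage from ``length-increasing'' to ``$w_a^{-1}\alpha_{i_a}\in\Phi^+$'' explicit, which is immediate from the classical description of the inversion set $\Phi(w)=\{\beta\in\Phi^+:w\beta\in\Phi^-\}$ together with $|\Phi(w)|=\ell(w)$.
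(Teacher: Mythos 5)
Your proof is correct and takes a genuinely different, and cleaner, route than the paper's. The paper argues by induction on $\ell(w)$: assuming the inequality fails at the last position, it combines the simply-laced pairing formulas $s_i\varpi_i=-\varpi_i+\sum_{h\sim i}\varpi_h$ with a case analysis on where the letter $i_{m+1}$ last occurred earlier in the word, using commutation and braid relations to contradict reducedness. Your argument instead invokes two standard Coxeter-theoretic facts: prefixes of reduced expressions are reduced (so each step $w_a\mapsto s_{i_a}w_a$ is length-increasing), and the length criterion $\ell(s_\alpha w)>\ell(w)\iff w^{-1}\alpha\in\Phi^+$. Combined with $W$-invariance of the form and nonnegativity of $\langle\beta,\varpi_j\rangle$ for $\beta\in\Phi^+$, the inequality is immediate. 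Your approach is shorter, does not actually require the simply-laced hypothesis (positive coroots pair nonnegatively with fundamental weights in general), and sidesteps the paper's delicate bookkeeping (the paper's negation of admissibility uses $\le 0$ where $<0$ is meant, and its monotonicity claim $\langle\alpha_{i_{m+1}},s_t\gamma\rangle\ge\langle\alpha_{i_{m+1}},\gamma\rangle$ silently relies on the induction hypothesis that earlier positions are admissible). This is essentially the textbook argument one would want here, and it is a strict improvement on the paper's.
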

\begin{proof}

Since we are in the ADE case, 
\begin{align}
s_i \varpi_i&=-\varpi_i+\sum_{h \text{ is adjacent to i }} \varpi_h. \\
s_i \varpi_h&= \varpi_h, \textup{ for $h\ne i$.}
\end{align}
We use induction on the length of $w$. 
Suppose lemma holds when $l(w)\leq m$.
Take  a reduced expression of $w \in W$ with length $m+1$ : 
$w=s_{i_{m+1}}\cdots s_{i_{1}}$.
Suppose this expression is not admissible,
we have 
$\langle \alpha_{i_{m+1}},s_{i_{m}}\cdots s_{i_{1}}\varpi_j\rangle \leq0$. 
Since $\langle \alpha_{i_{m+1}}, \varpi_j \rangle \geq0$, and by (6),(7) $$\langle \alpha_{i_{m+1}}, s_t \gamma \rangle \geq \langle \alpha_{i_{m+1}}, \gamma \rangle. $$
unless $t=i_{m+1}$, there must exists $k$ such that $i_k=i_{m+1}$.Let k be the biggest number such that  $i_{k}=i_{m+1}$.\\
In the case there is no element in the set $\{i_m,\cdots ,i_{k+1}\}$ is adjacent to $i_{m+1}$ in the Coxeter diagram, $s_{i_{m+1}}$ commutes with $s_{i_m}\cdots s_{i_{k+1}}$. Therefore
$s_{i_{m+1}} s_{i_{m}}\cdots s_{i_{k+1}} s_{i_{m+1}}=s_{i_{m+1}} s_{i_{m+1}}
s_{i_{m}}\cdots s_{i_{k+1}}=s_{i_{m}}\cdots s_{i_{k+1}}$
so the $w=s_{i_{m+1}}\cdots s_{i_{1}}$ is not reduced, contradiction.\\
In the case where for some $t$, $i_{t}$ is adjacent to $i_{m+1}$, we will show we can reduce to the case we have only one such t.
Suppose we have at least two elements $i_{t_1},i_{t_2}\cdots ,i_{t_h}$ such that they are all adjacent to $i_{m+1}$. 
Since $\langle \alpha_{i_{m+1}},s_{i_{m+1}}\cdots s_{i_{k}}\cdots s_{i_1}\varpi_j\rangle \leq0$ and $h>1$, by (6), (7), we must have some $i_{u_1}, i_{u_2}$ such that they are adjacent to $i_{m+1}$.
Since one point at most has 3 adjacent points we must have some $i_{u_x}=i_{t_y}.$
Let $p=i_{u_x}=i_{t_y}$. Using $s_p s_{i_{m+1}} s_p=s_{i_{m+1}} s_p s_{i_{m+1}}$ we can move $s_{i_{m+1}}$ in front of $s_{t_y}$ so the number h is reduced by 1.
We could do this procedure until h=1.
In this case we can rewrite the sequence before $s_{i_{k-1}}$ using the braid relation between $i_{m+1}$ and $i_t$: 
$$s_{i_m+1}\cdots s_{i_{t}} \cdots s_{i_k}=s_{i_m+1}\cdots s_{i_{t}} \cdots s_{{i_m+1}}= \cdots s_{i_{m+1}} s_{i_{t}} s_{i_{m+1}} \cdots=
\cdots s_{i_{t}} s_{i_{m+1}} s_{i_{t}}\cdots.$$
Set $\beta=s_{i_{k-1}}\cdots s_{i_{1}}\varpi_{j}$. 
By induction hypothesis, $s_{i_{m+1}} s_{i_{t}} s_{i_{m+1}} \cdots $and $s_{i_{t}} s_{i_{m+1}} s_{i_{t}}\cdots $ are admissible.
So $\langle \alpha_{i_t}, \beta \rangle \geq0$ and $\langle \alpha_{i_{m+1}}, \beta \rangle \geq0$. Again using (6) and (7) we have $\langle s_{i_{t}}   s_{i_{m+1}}\beta, \alpha_{i_{m+1}}\rangle\geq0$.
Then $\langle s_{i_{m+1}}\cdots s_{i_{1}}\varpi_j, \alpha_{i_{m+1}}\rangle
=\langle s_{i_{t}}   s_{i_{m+1}}\beta, \alpha_{i_{m+1}}\rangle\geq0$, contradicts with $s_{i_{m+1}}\cdots s_{i_{1}}$ is not admissible. 

\end{proof}

\begin{proof}[Proof of lemma 2]

Set $F_0=\{\varpi_j\}$.
Let $F_{m}$ be the set which contains all $w\varpi_j$, 
where $l(w)\leq m$.
We use induction. Suppose lemma 2 holds for $\gamma\in F_{m}$, we will prove lemma holds when $\gamma\in F_{m+1}.$
For any $\gamma=w\varpi_j \in F_{m+1}$, by lemma 1, w has a reduced admissible expression: $w=s_{i_{m+1}}\cdots s_{i_{1}}$. Denote $i_{m+1}$ by i and $\beta$ by  $s_{i_{m}}\cdots s_{i_{1}}\varpi_j$. So 
 $\gamma=s_{i}\beta$, $\beta \in F_{m}$.
 Since $s_{i_{m+1}}\cdots s_{i_{1}}$ is admissible, 
$\langle \beta, \alpha_i \rangle\geq 0$. Therefore $\langle \gamma, \alpha_i \rangle=
\langle s_i\beta, \alpha_i \rangle=-
\langle \beta, \alpha_i \rangle
\leq 0$ and we can apply prop 4.1 in \cite{MR2892443}.
Then $D_{\gamma}(M)=D_{s_{i}(s_{i}\gamma)}(M)=D_{s_{i}\gamma}(\Sigma_{i}M)$ , where $\Sigma_i$ is the reflection functor defined in section 2.2 in \cite{MR2892443}.\\
Let $A=\{j\ |\ j \text{ is adjacent to i, } j\in I\}$ and
 $M_{A}=\oplus_{s\in A} M_s$.
The $i^{th}$ component of $\Sigma_{i}M$
is the kernel of the map $\xi$ (Still see section 2.2 in \cite{MR2892443} for the definition of $\xi$) from $M_{A}$ to $M_{i}$. 
Since $\beta\in F_{m}$,  by induction hypothesis, we can apply this lemma to the case where $\gamma$ is taken to be $\beta$ and the module $M$ is $\Sigma_{i}M$.
Recall we denote by $I_\gamma^{+}$ the subset of I containing all i such that $\langle \gamma, \check{\alpha_i}\rangle$ is positive and by $I_\gamma^{-}$ containing all i $\langle \gamma, \check{\alpha_i}\rangle$ is negative.

Denote $A_{+}=\{j\ |\ j \text{ is adjacent to i, } j\in I_{\gamma}^{+}\}$ and $A_{-}=A\setminus A_{+}.$
For a multiset $S$, let $M_S=\oplus M_s^{m(s)}$. 
Regarding $I_{\gamma}^{-}$ as a multiset by setting $m(i)=\gamma_i^-$, we can rewrite  
$\oplus_{i\in I_{\gamma}^{-}}M_{i}^{\gamma^{-}}$ as $
M_{I^{-}_{\gamma}}$, similarly $\oplus_{i\in I_{\gamma}^{+}}M_{i}^{\gamma^{+}}$ as $
M_{I^{+}_{\gamma}}$.

Consider the case when $\langle \gamma, \alpha_i \rangle =-1$. 
We have  $I_{\beta}^{+}=I_{s_i\gamma}^{+}=(I_{\gamma}^{+}\setminus A_{+})\bigcup \{i\}$ and $I_{\beta}^{-}=I_{s_i\gamma}^{-}=(I_{\gamma}^{-}\setminus \{i\}) \bigcup A_{-}$ as multisets. Therefore
$D_{s_{i}\gamma}(\Sigma_{i}M)$ is the dimension of the kernel the natural map (which is $\phi_{\beta}$) from $M_{I^{+}_{\gamma}\setminus A_{+}}\oplus ker(M_{A}\xrightarrow[]{\xi} M_{i})$ to $M_{I^{-}_{\gamma} \setminus\{i\}}\oplus M_{A_{-}}$. This is equal to the dimension of the kernel of the natural map from $M_{I^{+}_{\gamma}\setminus A_{+}}\oplus ker(M_{A_{+}}\xrightarrow[]{\xi} M_{i})$ to
$M_{I^{+}_{\gamma}\setminus \{i\}}$, which is just $ker(M^{+}_{I_{\gamma}}\xrightarrow[]{\phi_{\gamma}} M_{I^{+}_{\gamma}})$. The case when $\langle \gamma, \alpha_i \rangle=-2$ is similar.

\end{proof}

\bibliographystyle{alpha}
\bibliography{sample.bib}

\begin{thebibliography}{KMW16}

\bibitem[BD91]{beilinson1991quantization}
Alexander Beilinson and Vladimir Drinfeld.
\newblock Quantization of hitchin’s integrable system and hecke eigensheaves,
  1991.

\bibitem[BK12]{MR2892443}
Pierre Baumann and Joel Kamnitzer.
\newblock Preprojective algebras and {MV} polytopes.
\newblock {\em Represent. Theory}, 16:152--188, 2012.

\bibitem[CC04]{caldero2004cluster}
Philippe Caldero and Fr{\'e}d{\'e}ric Chapoton.
\newblock Cluster algebras as hall algebras of quiver representations.
\newblock {\em arXiv preprint math/0410187}, 2004.

\bibitem[GKM97]{goresky1997equivariant}
Mark Goresky, Robert Kottwitz, and Robert MacPherson.
\newblock Equivariant cohomology, koszul duality, and the localization theorem.
\newblock {\em Inventiones mathematicae}, 131(1):25--83, 1997.

\bibitem[Kam05]{kamnitzer2005}
Joel Kamnitzer.
\newblock Mirkovic-vilonen cycles and polytopes.
\newblock {\em arXiv preprint math/0501365}, 2005.

\bibitem[Kam07]{kcrystal}
Joel Kamnitzer.
\newblock The crystal structure on the set of mirkovi{\'c}--vilonen polytopes.
\newblock {\em Advances in Mathematics}, 215(1):66--93, 2007.

\bibitem[KMW16]{kamnitzer2016reducedness}
Joel Kamnitzer, Dinakar Muthiah, and Alex Weekes.
\newblock On a reducedness conjecture for spherical schubert varieties and
  slices in the affine grassmannian.
\newblock {\em arXiv preprint arXiv:1604.00053}, 2016.

\bibitem[Lus90]{lusztig1990canonical}
George Lusztig.
\newblock Canonical bases arising from quantized enveloping algebras.
\newblock {\em Journal of the American Mathematical Society}, 3(2):447--498,
  1990.

\bibitem[Mir14]{localivan}
Ivan Mirkovi\'c.
\newblock Loop {G}rassmannians in the framework of local spaces over a curve.
\newblock In {\em Recent advances in representation theory, quantum groups,
  algebraic geometry, and related topics}, volume 623 of {\em Contemp. Math.},
  pages 215--226. Amer. Math. Soc., Providence, RI, 2014.

\bibitem[Mir17a]{ivan3}
I~Mirković.
\newblock Classifying spaces.
\newblock {\em unpublished notes}, 2017.

\bibitem[Mir17b]{ivan2}
I~Mirković.
\newblock Some extensions of the notion of loop grassmannians.
\newblock {\em http://hrcak.srce.hr/186430}, 2017.

\bibitem[Shi85]{shimomura}
Naohisa Shimomura.
\newblock The fixed point subvarieties of unipotent transformations on the flag
  varieties.
\newblock {\em Journal of the Mathematical Society of Japan}, 37(3):537--556,
  1985.

\bibitem[Tym05]{tymoczko2005introduction}
J~Tymoczko.
\newblock An introduction to equivariant cohomology and homology.
\newblock In {\em Snowbird Lectures in Algebraic Geometry: Proceedings of an
  AMS-IMS-SIAM Joint Summer Research Conference on Algebraic Geometry:
  Presentations by Young Researchers, July 4-8, 2004}, volume 388, page 169.
  American Mathematical Soc., 2005.

\bibitem[Tym07]{affine}
Julianna~S Tymoczko.
\newblock Paving hessenberg varieties by affines.
\newblock {\em Selecta Mathematica, New Series}, 13(2):353--367, 2007.

\bibitem[Zhu16]{zhu}
Xinwen Zhu.
\newblock An introduction to affine grassmannians and the geometric satake
  equivalence.
\newblock {\em arXiv preprint arXiv:1603.05593}, 2016.

\end{thebibliography}

University of Massachusetts, Amherst, MA.\\
E-mail address: zdong@math.umass.edu
\end{document}